\newcommand{\md}{\mathrm{d}}
\newtheorem{theorem}{Theorem}
\numberwithin{theorem}{section}
\newtheorem{rem}[theorem]{Remark}
\numberwithin{equation}{section}
\definecolor{orange}{rgb}{1,0.5,0}
\definecolor{rb}{rgb}{1,0,1}
\begin{document}

\title{\textbf{Unconditionally positivity preserving and energy dissipative schemes for Poisson--Nernst--Planck equations}\footnote{This work is supported in part by AFOSR FA9550-16-1-0102, NSF DMS-1620262, DMS-1720442, and NSFC No. 11688101.}}
\author{Jie Shen\footnote{Department of Mathematics, Purdue University, USA, Email: shen7@purdue.edu}\ \ and Jie Xu\footnote{LSEC \& NCMIS, Institute of Computational Mathematics and Scientific/Engineering Computing (ICMSEC), Academy of Mathematics and Systems Science (AMSS), Chinese Academy of Sciences, Beijing 100190, China. Email: xujie@lsec.cc.ac.cn}}
\date{}
\maketitle
\begin{abstract}
  We develop a set of numerical schemes for the Poisson--Nernst--Planck equations. We prove that our schemes are mass conservative, uniquely solvable and keep positivity unconditionally. Furthermore, the first-order scheme is proven to be unconditionally energy dissipative. These properties hold for various spatial discretizations. Numerical results are presented to validate these properties.  Moreover,  numerical results indicate that the second-order scheme is also energy dissipative, and  both the first- and second-order schemes preserve the maximum principle for cases where the equation satisfies the maximum principle. 
\end{abstract}

{\bf Keywords.} Poisson--Nernst--Planck equation; energy stability; positivity preserving; Galerkin methods; finite difference

{\bf AMS subject classification.} 65M12; 35K61; 35K55; 65Z05; 70F99.
\section{Introduction}
The Poisson--Nernst--Planck (PNP) equations describe the dynamics of charged particles in the electric field that is also affected by these particles, and have been 
used to model  physical systems involving motions of charged particles, including electrochemistry \cite{bazant2004diffuse}, semiconductor \cite{gajewski1986basic,ringhofer1990semiconductor}, and several biological phenomena \cite{corrias2004global,biler1994debye,eisenberg1998ionic}. 
When discussing the interplay of electric field and flow field, the PNP equation can also be coupled with the Navier--Stokes equation \cite{schmuck2009analysis}. 

A distinct feature of the PNP equations is that they are built as Wasserstein gradient flows \cite{MR2401600}. Wasserstein gradient flows are usually used to describe evolution of the concentration $c$ which remains to be positive, given a positive initial condition. The dissipation operator in Wasserstein flow is nonlinear, given by $\nabla\cdot(c\nabla(\cdot))$, whose negativity also requires $c$ to be positive. Meanwhile, in many cases the energy is well-defined with a lower bound only when $c$ is positive (see for example the Doi--Onsager type equation for liquid crystals \cite{doi1988theory,xu2018onsager}). Numerically, it is thus crucial to construct schemes that preserve positivity. 

There are several techniques of designing energy dissipative time-discretized schemes for gradient flows, 
including convex splitting \cite{MR1249036,eyre1998unconditionally,shen2012second}, stabilization \cite{ZCST99, shen2010numerical}, auxiliary variable approaches \cite{BGG11,Gui.T13} (including IEQ \cite{yang2016linear,zhao2016numerical} and SAV \cite{shen2018scalar,shen2017new,shen2018convergence}). 
However, none of these techniques guarantees positivity, a prerequisite of the energy dissipation. 
Hence, these techniques can not be easily applied to Wasserstein gradient flows. Note however that a positivity preserving scheme for a Cahn--Hilliard equation with Flory-Huggins energy potential, which is not a Wasserstein gradient flow,  was recently developed in \cite{chen2017positivity}. 

As for the PNP equations, some  schemes with different properties have been  constructed \cite{flavell2014conservative,nanninga2008computational,lopreore2008computational,cagni2007effects,gardner2011electrodiffusion,gardner2004electrodiffusion,eisenberg2011mathematical,horng2012pnp}. Rigorous  numerical analyses for a set of finite-element approximations have been carried out  in \cite{prohl2009convergent}. 
Many Wasserstein gradient flows include a common dissipative term $\nabla\cdot (D\nabla c)$. In the context of Wasserstein gradient flow, to derive the energy dissipation, it needs to be interpreted as $\nabla\cdot(Dc\nabla\log c)$ to be consistent with other terms that take the form $\nabla\cdot(Dc\nabla(\cdot))$. 
The existing schemes are all based on the first interpretation and utilize standard time discretization, including implicit Euler, Crank--Nicolson, or backward differentiation formulas. Some of them  preserve positivity or some form of energy (not the entropy form) dissipation under certain conditions, but usually not both. In \cite{prohl2009convergent}, a quite complicated entropy-based scheme with regularized free energy is constructed, and proven to preserve   energy dissipation (in the entropy form), and  satisfy `quasi-non-negativity' (numerical solution bounded from below by an  arbitrarily small negative number) which is made possible by regularizing the mobility and free energy  so that it is well-defined for negative concentration. 
However, regularization cannot preserve positivity in the strong sense. It shall become clear that in order to construct schemes which preserve positivity and energy dissipation, one should deal with $\nabla\cdot(Dc\nabla\log c)$ instead of $\nabla\cdot (D\nabla c)$.

In this paper, we shall construct   schemes  for PNP equations which are 
\begin{enumerate}[(i)]
\item  mass conservative,
\item uniquely solvable, 
\item positivity preserving, and 
\item unconditionally energy dissipative. 
\end{enumerate}
We discretize the PNP equations in the context of Wasserstein gradient flow, based on the form $\nabla\cdot(Dc\nabla\log c)$. 
The appearance of logarithmic function in the schemes is essential to guarantee the concentration, which is also part of the diffusion coefficient, to be positive. 
This is attained by treating the coefficient $c$ explicitly, and $\log c$ from the variational derivative of the energy implicitly. 
The key for achieving the nice properties stated above is  that the schemes can be interpreted as  minimization of a strictly convex functional, which implies the uniquely solvability, positivity and energy dissipation.

We start by constructing a  first-order time discretization scheme and show that it satisfies the four properties (i)-(iv), and we believe that it is the only scheme which is positivity preserving and 
unconditionally energy dissipative.
We then construct a second-order scheme, and show that it satisfies the properties (i)--(iii). 
We also discuss how to construct full  discretization schemes which can preserve the  properties of the time discretization schemes. Although at each time step, these schemes lead to a nonlinear system but 
since its unique solution  is the minimizer of a strictly convex functional, it can be solved efficiently by  Newton's iteration.
We provide ample numerical results to  show that both first- and second-order schemes satisfy the four properties. Moreover, in some special cases where the solution of the PNP equation satisfies maximum principle and  electrostatic energy dissipation,  both the first- and second-order schemes can also preserve the maximum principle and electrostatic energy dissipation. 

The rest of paper is organized as follows. 
In Section \ref{PDE}, we introduce the PNP equations and state some of their properties that we desire to keep in numerical solutions. 
Then, we construct  numerical schemes in Section \ref{scheme} and prove that they satisfy the four properties stated earlier. We start by writing down the semi-discrete-in-time scheme, followed by careful discretization in space so that the properties of time discretization schemes can be preserved in the full discretization. 
Numerical results are presented in Section \ref{results}. 
Concluding remarks are given in the last section. 

\section{PNP equations\label{PDE}}
We first introduce the PNP equations in a general form, and then pay particular attention to a popular two-component system because it possesses extra properties. 
\subsection{General form}
We consider a system with $N$ species of charged particles driven by Brownian motion and the electric field in an open bounded domain $\Omega\subset \mathbb{R}^d \;(d=1,2,3)$. The system is charged with a fixed constant density $\rho_0$. 
To write down the dimensionless PNP equations governing the motion of this system,  we introduce some dimensionless quantities (functions) below: 
\begin{itemize}
\item $c_i(\bm{x})$ is the density of the $i$-th species;
\item  $\phi(\bm{x})$ is the internal electric potential contributed by the charged particles; $\phi_e(\bm{x})$ is a given external electric potential; 
\item The chemical potential w.r.t. $c_i$ is $\mu_i=\log c_i+z_i(\phi+\phi_e)$;
\item The constants $z_i,D_i>0$ are the valence and the diffusion constant of the $i$-th species, 
and $\epsilon>0$ is the permittivity. 
 \end{itemize}
Then, the PNP equations are  written as 
\begin{align}
  \frac{\partial c_i}{\partial t}
 =&\nabla\cdot(D_ic_i\nabla\mu_i)
  =\nabla\cdot\Big(D_ic_i\nabla\big(\log c_i+z_i(\phi+\phi_e)\big)\Big)\quad \text{in}\;\Omega\quad (i=1,\ldots,N), \label{PnP1}
\end{align}
where the internal electric potential $\phi$ is determined by 
\begin{equation}
  -\nabla\cdot(\epsilon\nabla\phi)=\rho_0+\sum_{i=1}^Nz_ic_i \quad \text{in}\;\Omega. \label{PnP2}
\end{equation}
Noticing that $\nabla c=c\nabla \log c$, we  can rewrite \eqref{PnP1} as
\begin{align}
  \frac{\partial c_i}{\partial t}
 =\nabla\cdot\Big(D_i\big(\nabla c_i+z_ic_i\nabla(\phi+\phi_e)\big)\Big)\quad \text{in}\;\Omega\quad (i=1,\ldots,N), \label{PnP3}
\end{align}
which is in the form most often used in the literature.

The boundary conditions are imposed on $\mu_i$ and $\phi$. 
They can be either  periodic on both $\mu_i$ and $\phi$; 
or, be of Neumann type  on the flux to guarantee the mass conservation, 
$$
c_i\frac{\partial\mu_i}{\partial\bm{n}}=c_i\frac{\partial\big(\log c_i+z_i(\phi+\phi_e)\big)}{\partial\bm{n}}=\frac{\partial c_i}{\partial\bm{n}}+z_ic_i\frac{\partial(\phi+\phi_e)}{\partial\bm{n}}=0, 
$$
and either Dirichlet, Neumann, or Robin boundary conditions on $\phi$, 
$$
\phi=0; \quad \text{ or }\frac{\partial\phi}{\partial\bm{n}}=0; \quad \text{ or }\alpha\phi+\beta\frac{\partial\phi}{\partial\bm{n}}=0,\ \alpha,\beta>0. 
$$
If using periodic or Neumann boundary conditions on $\phi$, we require that 
$$
\rho_0+\sum z_i\bar{c}_i=0, \qquad
\int_\Omega\phi\,\md \bm{x}=0, 
$$
where $\bar{c}_i$ is the average density of the $i$-th species. 

\begin{rem} We only consider periodic or homogeneous boundary conditions above on $\phi$. For non-homogeneous boundary conditions such as $\phi|_{\partial\Omega}=g$, we can split $\phi$ as $\phi_1+\phi_2$, with 
\begin{align*}
  &-\nabla\cdot(\epsilon\nabla\phi_1)=\rho_0+\sum_{i=1}^Nz_ic_i, \quad \phi|_{\partial\Omega}=0, \\
  &-\nabla\cdot(\epsilon\nabla\phi_2)=0, \quad \phi|_{\partial\Omega}=g. 
\end{align*}
Note that $\phi_2$ does not depend on $c_i$. Thus, $\phi_2$ actually acts as an external potential and could be added to $\phi_e$. 
It is known that the profiles of $c_i$ can sensitively depend on the boundary conditions \cite{flavell2014conservative}. 
In the above formulation, it actually implies that $\phi_2$, which goes in to the external potential, greatly affects the profile. 
\end{rem}

The total free energy of the system is given by 
\begin{equation}
  E(\{c_i\},\phi)=\int_\Omega \sum_{i=1}^Nc_i(\log c_i-1) + \left(\rho_0+\sum_{i=1}^Nz_ic_i\right)(\frac{1}{2}\phi+\phi_e)\,\md \bm{x}. \label{totalenergy}
\end{equation}
Assuming $\|\phi_e\|_{L^\infty(\Omega)}\le A$, we derive that the total energy is bounded from below. 
Indeed, we have 
$$c_i\log c_i-c_i+z_i\phi_ec_i\ge c_i\log c_i-(|z_i|A+1)c_i,$$
which is bounded from below. 
For the term with the internal potential $\phi$, we derive by integration by parts that 
\begin{align*}
  \int \left(\rho_0+\sum_{i=1}^Nz_ic_i\right)\phi\,\md \bm{x} 
  =\left\{
  \begin{array}{l}
    \displaystyle\int \epsilon|\nabla\phi|^2\,\md \bm{x}, \quad \text{with periodic, Dirichlet or Neumann B.C.},\\
    \displaystyle\int \epsilon|\nabla\phi|^2\,\md \bm{x}+\int_{\partial\Omega} \epsilon\frac{\alpha}{\beta}|\phi|^2\,\md S, \quad \text{with Robin B.C.}. 
  \end{array}
  \right.
\end{align*}

The PNP equations \eqref{PnP1}-\eqref{PnP2}   satisfy several important properties: 
\begin{enumerate}
\item Mass conservation: Integrating \eqref{PnP1} over $\Omega$, we obtain immediately
$$
\int_\Omega c_i(\cdot,t)\md \bm{x}=\int_\Omega c_i(\cdot,0)\md \bm{x}. 
$$
\item Positivity: The well-posedness of \eqref{PnP1}-\eqref{PnP2} (cf. \cite{schmuck2009analysis}) ensures that, if $c_i(\cdot, 0)>0$, then we still have $c_i(\cdot,t)>0$ for any $t>0$. 

\item Energy dissipation: 
  \begin{equation}\label{diss}
    \frac{\md E}{\md t}=-\int\sum_{i=1}^N D_ic_i\left|\nabla\mu_i\right|^2 \md \bm{x}. 
  \end{equation}
To derive the above energy dissipation, we need to observe that $\mu_i=\displaystyle\frac{\delta E}{\delta c_i}$. 
Actually, the variation $\delta\phi$ satisfies 
$$
-\nabla\cdot\big(\epsilon\nabla(\delta\phi)\big)=\sum_{i=1}^Nz_i\delta c_i, 
$$
with the same boundary conditions as $\phi$. Regardless of the type of the boundary conditions, we have 
$$
\int-\delta\phi\nabla\cdot(\epsilon\nabla\phi)\,\md \bm{x}=\int-\nabla\cdot\big(\epsilon\nabla(\delta\phi)\big)\phi \,\md \bm{x}. 
$$
So we have 
\begin{align*}
  &\delta\int\left(\rho_0+\sum_{i=1}^Nz_ic_i\right)\phi\,\md \bm{x}
  =\int\phi\sum_{i=1}^Nz_i\delta c_i-\delta\phi\nabla\cdot(\epsilon\nabla\phi)\,\md \bm{x}\\
  &\qquad=\int\phi\sum_{i=1}^Nz_i\delta c_i-\nabla\cdot\big(\epsilon\nabla(\delta\phi)\big)\phi\,\md \bm{x}
  =2\int\phi\sum_{i=1}^Nz_i\delta c_i\,\md \bm{x}
\end{align*}
Therefore, by multiplying the equation \eqref{PnP1} with $\mu_i$, taking the integral and summing up over $i$, we obtain \eqref{diss}.

\end{enumerate}

\subsection{A two-component system\label{twocomponent}}
We consider a two-component system ($N=2$) which has attracted special attention in the literature. 
We set $z_1=1$, $z_2=-1$, $\epsilon=1$ and the external electric potential $\phi_e=0$. 
Denote $p=c_1$ and $n=c_2$. Let the average density be $\bar{c}_1=\bar{c}_2$ so that  $\rho_0=0$. 
The PNP equation becomes 
\begin{align}
 & \frac{\partial p}{\partial t}
  =\nabla\cdot\Big(D_1p\nabla(\log p+\phi)\Big), \label{PnP2a} \\
&  \frac{\partial n}{\partial t}
  =\nabla\cdot\Big(D_2n\nabla(\log n-\phi)\Big),  \label{PnP2b} \\
 & -\Delta\phi=p-n,  \label{PnP2c}
\end{align}
where $p$ and $n$ denote the concentration of positively and negatively charged particles, respectively, and $\phi$ is the electronic potential. This system by W. Nernst and M. Planck to describe the potential difference in a galvanic cell (e.g., rechargeable batteries, or biological cells), and  has applications in many different fields including  chemistry,  biology,   plasma physics, and modeling of semi-conductor devices. 

The above system has two special properties stated below, which are satisfied only under the periodic or Neumann boundary conditions for $p,\ n,\ \phi$. 
They do not necessarily hold for the general form of PNP equations. 
\begin{enumerate}
\item  The electrostatic energy $\|\nabla \phi\|^2/2$ is dissipative if
$D_1=D_2=D$. Indeed,  multiplying \eqref{PnP2a} and   \eqref{PnP2b} with $\phi$ and calculating their difference, we obtain 
\begin{equation}
  \frac{\md \|\nabla\phi\|^2/2}{\md t}=
  -D\int \left[(p-n)^2+(p+n)|\nabla\phi|^2\right]\md \bm{x}.  
\end{equation}
It needs to be pointed out that the electrostatic energy is part of the total energy \eqref{totalenergy}, by noticing the derivation below \eqref{totalenergy}. 

\item 
The solutions $p$ and $n$ satisfy maximum principle which can be proved as follows. Multiplying \eqref{PnP2a} with $p^{k-1}$, we obtain
\begin{align*}
  \int\frac{1}{k}\cdot\frac{\partial p^k}{\partial t}\md \bm{x}
  =&-D_1\int (k-1)p^{k-2}|\nabla p|^2+p\nabla(p^{k-1})\cdot\nabla\phi \md \bm{x}, \\
  =&-D_1\int (k-1)p^{k-2}|\nabla p|^2+\frac{k-1}{k}\nabla(p^k)\cdot\nabla\phi \md \bm{x}, \\
  =&-D_1\int (k-1)p^{k-2}|\nabla p|^2-\frac{k-1}{k}p^k\Delta\phi \md \bm{x}, \\
  =&-D_1\int (k-1)p^{k-2}|\nabla p|^2+\frac{k-1}{k}p^k(p-n) \md \bm{x}. 
\end{align*}
Similarly, multiplying \eqref{PnP2b} with $n^{k-1}$,  we have 
\begin{align*}
  \int\frac{1}{k}\cdot\frac{\partial n^k}{\partial t}\md \bm{x}
  =&-D_2\int (k-1)n^{k-2}|\nabla n|^2-\frac{k-1}{k}n^k(p-n) \md \bm{x}. 
\end{align*}
Taking the sum of the above two equations, and noting that $p,\,n>0$, we obtain 
\begin{align}
  \frac{\partial (p^k/D_1+n^k/D_2)}{\partial t}
  =&-\int k(k-1)(p^{k-2}|\nabla p|^2+n^{k-2}|\nabla n|^2)\nonumber\\
  &\qquad +(k-1)(p^k-n^k)(p-n) \md \bm{x}\le 0. \label{L_k}
\end{align}
So we have 
\begin{align*}
\|p(t)\|_{L^k}\le &\left(\|p(t)\|_{L^k}^k+\frac{D_1}{D_2}\|n(t)\|_{L^k}^k\right)^{1/k}
\le \left(\|p(0)\|_{L^k}^k+\frac{D_1}{D_2}\|n(0)\|_{L^k}^k\right)^{1/k}\\
\le &\left(1+\frac{D_1}{D_2}\right)^{1/k}\max\{\|p(0)\|_{L^{\infty}},\|n(0)\|_{L^{\infty}}\}. 
\end{align*}
Taking the limit $k\to +\infty$, we obtain
$$
\max\{\|p(t)\|_{L^{\infty}},\|n(t)\|_{L^{\infty}}\}\le \max\{\|p(0)\|_{L^{\infty}},\|n(0)\|_{L^{\infty}}\}. 
$$
Note that the inequality \eqref{L_k} also holds for $k<-1$, we then obtain by taking $k\to -\infty$   that
$$
\max\left\{\left\|\frac{1}{p(t)}\right\|_{L^{\infty}},\left\|\frac{1}{n(t)}\right\|_{L^{\infty}}\right\}\le \max\left\{\left\|\frac{1}{p(0)}\right\|_{L^{\infty}},\left\|\frac{1}{n(0)}\right\|_{L^{\infty}}\right\}. 
$$

\end{enumerate}
Although we are not aiming to design numerical schemes guarateeing these two properties theoretically, we are still interested in and will examine whether they can be kept in the numerical solutions. 

\section{Numerical scheme\label{scheme}}
We construct in this section numerical schemes for \eqref{PnP1}-\eqref{PnP2}. 
We start from a first-order  scheme and prove that it enjoys the four nice properties described in the introduction. 
We then construct a second-order  scheme which enjoys the first three properties.

\subsection{First-order  scheme}
We first write down the time-discretized scheme for \eqref{PnP1}-\eqref{PnP2}:
\begin{align}
  &\frac{c_i^{n+1}-c_i^n}{\delta t}=\nabla\cdot(D_ic_i^n\nabla\mu_i^{n+1})
  =\nabla\cdot\Big(D_ic_i^n\nabla\big(\log c_i^{n+1}+z_i(\phi^{n+1}+\phi_e)\big)\Big),\quad i=1,\ldots,N, \label{first_1}\\
  &-\nabla\cdot(\epsilon\nabla\phi^{n+1})=\rho_0+\sum_{i=1}^Nz_ic_i^{n+1}, \label{first_2}
\end{align}
with the boundary conditions imposed on $\mu_i^{n+1}=\log c_i^{n+1}+z_i(\phi^{n+1}+\phi_e)$ and $\phi^{n+1}$ as in the PDE system \eqref{PnP1}-\eqref{PnP2}. 

\begin{theorem}\label{semidis}
  Assume $c_i^n>0$ for all $i$. For any solution to the scheme \eqref{first_1}-\eqref{first_2}, we have 
  \begin{enumerate}
  \item Mass conservation: 
    $$
    \int c_i^{n+1}\md \bm{x}=\int c_i^n\md \bm{x}. 
    $$
  \item Positivity preserving: $c_i^{n+1}>0$ for all  $i$. 
  \item Energy dissipation: 
    \begin{equation}\label{dissn}
    {E^{n+1}-E^n}\le -\delta t\int\sum_{i=1}^ND_ic_i^n\left|\nabla\mu_i^{n+1}\right|^2 \md \bm{x}, \quad n\ge 0. 
    \end{equation}
    where $E^k=\int_\Omega \sum_{i=1}^Nc^k_i(\log c^k_i-1) + \left(\rho_0+\sum_{i=1}^Nz_ic^k_i\right)(\frac{1}{2}\phi^k+\phi_e)\,\md \bm{x}$  \end{enumerate}
\end{theorem}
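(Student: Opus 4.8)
The plan is to establish the three properties in the natural order, deriving all of them from a single variational principle.

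First, mass conservation is the easy part: integrate \eqref{first_1} over $\Omega$, and observe that the right-hand side is a divergence of a flux $D_i c_i^n \nabla \mu_i^{n+1}$ which vanishes on $\partial\Omega$ under the Neumann flux boundary conditions (or integrates to zero under periodic boundary conditions). This immediately gives $\int c_i^{n+1}\,\md\bm x = \int c_i^n\,\md\bm x$.

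The heart of the argument is to show that the scheme \eqref{first_1}--\eqref{first_2} is exactly the Euler--Lagrange system for minimizing a strictly convex functional. I would introduce, on the affine space of functions $\{c_i\}$ with prescribed mass $\int c_i = \int c_i^n$, the functional
\begin{align*}
  J^{n+1}(\{c_i\}) = \sum_{i=1}^N \frac{1}{2D_i\,\delta t}\int c_i^n \big|\nabla \psi_i\big|^2\,\md\bm x
  + \int \sum_{i=1}^N c_i(\log c_i - 1) + \Big(\rho_0 + \sum_{i=1}^N z_i c_i\Big)\Big(\tfrac12 \phi + \phi_e\Big)\,\md\bm x,
\end{align*}
where $\psi_i$ solves $\nabla\cdot(D_i c_i^n \nabla\psi_i) = (c_i - c_i^n)/\delta t$ with the appropriate boundary conditions (so $\psi_i$ is the discrete ``velocity potential'' realizing the update, well-defined since $c_i^n>0$ makes the weighted Laplacian invertible), and $\phi=\phi[\{c_i\}]$ solves \eqref{first_2}. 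The first (kinetic) term is convex and quadratic in $\{c_i\}$; the entropy term $\sum c_i(\log c_i-1)$ is strictly convex; the electrostatic term is convex because, by the integration-by-parts identity already derived below \eqref{totalenergy}, $\int(\rho_0+\sum z_i c_i)\phi = \int\epsilon|\nabla\phi|^2$ (plus a nonnegative boundary term for Robin B.C.), a nonnegative quadratic form in $\{c_i\}$, while the $\phi_e$-coupling is linear. Hence $J^{n+1}$ is strictly convex. Because the entropy term forces $c_i>0$ and blows up (in the right direction) as $c_i\to 0^+$, the minimizer stays strictly positive, giving property 2 once we identify the minimizer with the scheme's solution; and strict convexity gives uniqueness. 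Computing $\delta J^{n+1}/\delta c_i$ and setting it to zero reproduces $\psi_i/(D_i\delta t) + \mu_i^{n+1} = \text{const}$, i.e. $\nabla\mu_i^{n+1} = -\nabla\psi_i/(D_i\delta t)$, which combined with the definition of $\psi_i$ is precisely \eqref{first_1} with $\mu_i^{n+1}=\log c_i^{n+1}+z_i(\phi^{n+1}+\phi_e)$; here I use the chemical-potential-as-variational-derivative computation already carried out in the PDE section, in particular the factor-of-two cancellation $\delta\int(\rho_0+\sum z_i c_i)\phi = 2\int\phi\sum z_i\delta c_i$.

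Finally, energy dissipation follows by comparing the minimizer with the ``previous'' state: since $\{c_i^{n+1}\}$ minimizes $J^{n+1}$ over the admissible set, and $\{c_i^n\}$ lies in that set (with kinetic term zero, since $\psi_i\equiv\text{const}$ when $c_i=c_i^n$), we get $J^{n+1}(\{c_i^{n+1}\}) \le J^{n+1}(\{c_i^n\}) = E^n$. Noting $J^{n+1}(\{c_i^{n+1}\}) = E^{n+1} + \sum_i \frac{1}{2D_i\delta t}\int c_i^n|\nabla\psi_i^{n+1}|^2$ and substituting $\nabla\psi_i^{n+1} = -D_i\delta t\,\nabla\mu_i^{n+1}$ converts the kinetic term into $\frac{D_i\delta t}{2}\int c_i^n|\nabla\mu_i^{n+1}|^2$, yielding $E^{n+1} - E^n \le -\frac{\delta t}{2}\sum_i\int D_i c_i^n|\nabla\mu_i^{n+1}|^2$. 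To upgrade the constant $\tfrac12$ to the stated $1$ (matching \eqref{dissn}), I would instead argue directly: test \eqref{first_1} with $\mu_i^{n+1}$, sum over $i$, integrate by parts to get $\sum_i\frac{1}{\delta t}\int(c_i^{n+1}-c_i^n)\mu_i^{n+1} = -\sum_i\int D_i c_i^n|\nabla\mu_i^{n+1}|^2$, and then bound the left-hand side below by $(E^{n+1}-E^n)/\delta t$ using convexity of the energy density: concavity/convexity inequalities give $c_i^{n+1}(\log c_i^{n+1}-1) - c_i^n(\log c_i^n - 1) \le (c_i^{n+1}-c_i^n)\log c_i^{n+1}$ for the entropy part, while the quadratic electrostatic part contributes exactly $\frac12$-type differences that combine with the $\phi_e$ linear part to give the remaining terms of $\mu_i^{n+1}$; care with the factor of two in the $\phi$-term (again using the identity below \eqref{totalenergy}) produces the clean inequality \eqref{dissn}.

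The main obstacle is the bookkeeping in the last step: handling the internal potential $\phi$ correctly so that the factor-of-two in $\tfrac12\phi$ in the energy is consistent with the chemical potential $\mu_i$ containing the full $\phi$ (not $\tfrac12\phi$), and making sure the discrete integration-by-parts identity for the $\phi$-coupling holds at the discrete level under each admissible boundary condition. Establishing that $\psi_i$ is well-defined and that the functional $J^{n+1}$ is coercive enough to admit a minimizer (so existence, not just uniqueness, of a solution holds) is a secondary technical point, but it is controlled by the strict convexity and the superlinear growth of $t\log t$.
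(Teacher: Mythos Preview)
Your proposal is essentially correct, and the final argument you land on for energy dissipation---test \eqref{first_1} with $\mu_i^{n+1}$, sum, integrate by parts, and use the convexity inequality $(a-b)\log a \ge (a\log a - a) - (b\log b - b)$ together with the polarization identity for $\nabla\phi^{n+1}$---is exactly the paper's proof. Your handling of the factor of two in the $\phi$-coupling is also the same as the paper's.

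Two differences are worth noting. First, the paper's in-theorem proof of positivity is a one-liner: the scheme contains $\log c_i^{n+1}$, so any solution must have $c_i^{n+1}>0$ for the equation even to make sense. Your variational-minimization argument proves more (existence of a positive solution), and in fact the paper presents essentially the same convex functional immediately \emph{after} the theorem as a separate formal argument for unique solvability; so you have folded that discussion into the theorem proof, which is fine but not what the paper does. Second, your initial JKO-style comparison $J^{n+1}(\{c_i^{n+1}\})\le J^{n+1}(\{c_i^n\})$ genuinely loses a factor of $\tfrac12$ in the dissipation and is not used in the paper; you were right to abandon it and switch to direct testing. The direct-testing route is both simpler and sharper here, so there is no real payoff to the detour.
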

\begin{proof}
  We shall only prove the theorem for the Neumann boundary conditions on $\phi$ and $\mu_i$. The results with other boundary conditions can be proved in the same way, as we will point out afterwards. 

  Taking the integral of \eqref{first_1} over $\Omega$ and using the Neumann boundary conditions on the chemical potential, we obtain the mass conservation. 
 
  The positivity follows from the appearance of $\log c_i^{n+1}$. 
  
  It remains to prove the energy dissipation. To this end, we take  the inner product of the equation  \eqref{first_1} with $\log c_i^{n+1}+z_i(\phi^{n+1}+\phi_e)$,  summing up for $1\le i\le N$, we arrive at 
  \begin{align*}
    &\sum_{i=1}^N(c_i^{n+1}-c_i^n,\log c_i^{n+1})+\left(\rho_0+\sum_{i=1}^Nz_ic_i^{n+1}-\rho_0-\sum_{i=1}^Nz_ic_i^n,\phi^{n+1}+\phi_e\right)\\
    =&\sum_{i=1}^N(c_i^{n+1}-c_i^n,\log c_i^{n+1})+\left(\nabla\phi^{n+1}-\nabla\phi^n,\epsilon\nabla\phi^{n+1}\right)+\left(\rho_0+\sum_{i=1}^Nz_ic_i^{n+1}-\rho_0-\sum_{i=1}^Nz_ic_i^n,\phi_e\right)\\
    =&-\delta t\int\sum_{i=1}^ND_ic_i^n\left|\nabla(\log c_i^{n+1}+z_i(\phi^{n+1}+\phi_e))\right|^2 \md \bm{x}. 
  \end{align*}
  We note  that by Taylor expansion we have 
  \begin{equation}\label{logdiss}
  (a-b)\log a=(a\log a-a)-(b\log b-b)+\frac{(a-b)^2}{2\xi}, \quad \xi\in [\min\{a,b\},\max\{a,b\}]. 
  \end{equation}
  We also have  
  \begin{equation}\label{iden2}
  (\nabla\phi^{n+1}-\nabla\phi^n)\cdot\nabla\phi^{n+1}=\frac{1}{2}(|\nabla\phi^{n+1}|^2-|\nabla\phi^n|^2+|\nabla\phi^{n+1}-\nabla\phi^{n+1}|^2). 
   \end{equation}
  With the above equalities, we immediately derive \eqref{dissn}. 
\end{proof}

It remains to examine whether there exists a solution for the scheme. 
Below, we give a formal derivation by formulating it as the minimizer of a strictly convex functional. 
Still, we examine the Neumann boundary conditions for $\phi$ and $\mu_i$. 
Let us introduce  linear operators $\mathcal{L}_i^n$, which are defined as follows: let $\mathcal{L}_i^ng=u$ if they satisfy the following elliptic equation with the Neumann boundary conditions, 
\begin{align*}
  -\nabla\cdot(c_i^n\nabla u)=g,\quad 
  \int u \md \bm{x}=0. 
\end{align*}
Also, we define $\mathcal{L}$ as above where we replace $c_i^n$ with $\epsilon$. 
The linear operators $\mathcal{L}_i^n$ and $\mathcal{L}$ are symmetric and nonnegative in the sense $(u,\mathcal{L}u)\ge 0$. 
We consider the following functional 
\begin{align}
  F[c_i^{n+1}]=&
  \sum_{i=1}^N(c_i^{n+1}\log c_i^{n+1}-c_i^{n+1},1)
  +\frac{1}{2\delta t}\sum_{i=1}^N\Big(c_i^{n+1}-c_i^n,\mathcal{L}_i^n(c_i^{n+1}-c_i^n)\Big)\nonumber\\
  &+\frac{1}{2}\left(\rho_0+\sum_{i=1}^Nz_ic_i^{n+1},\mathcal{L}\Big(\rho_0+\sum_{i=1}^Nz_ic_i^{n+1}\Big)\right)
  +\left(\rho_0+\sum_{i=1}^Nz_ic_i^{n+1},\phi_e\right). \label{fnplus1}
\end{align}
The above functional is strictly convex, because $\displaystyle\int c_i^{n+1}\log c_i^{n+1}+(z_i\phi_e-1)c_i^{n+1}\md \bm{x}$ is strictly convex about $c_i^{n+1}$, and the remaining terms give a quadratic nonnegative functional. 
Its Euler-Langrange equation under the constraints of mass is 
\begin{align*}
  &\frac{1}{\delta t}\mathcal{L}_i^n(c_i^{n+1}-c_i^n)+\log c_i^{n+1}+z_i\mathcal{L}\Big(\rho_0+\sum_{i=1}^Nz_ic_i^{n+1}\Big)+z_i\phi_e\\
  &\qquad =\frac{1}{\delta t}\mathcal{L}_i^n(c_i^{n+1}-c_i^n)+\log c_i^{n+1}+z_i(\phi^{n+1}+\phi_e)=\lambda_i, \\
  &\int(c_i^{n+1}-c_i^n)\md\bm{x}=0. 
\end{align*}
where $\lambda_i$ are the Lagrange multipliers for the mass conservation. 
It is easy to see that the above equations are equivalent to \eqref{first_1}--\eqref{first_2}. 
The functional $F$ has a unique minimizer. 
Moreover, the minimizer cannot have $c_i^{n+1}(\bm{x})=0$, because the derivative of the term $c_i^{n+1}\log c_i^{n+1}-c_i^{n+1}$ has the derivative $\log c_i^{n+1}$ that tends to $-\infty$ at zero. 
Hence, the unique minimizer must have $c_i^{n+1}>0$ for all $i$, which is the unique solution to the Euler-Lagrange equation, hence to the scheme, because we can solve $\phi$ uniquely from \eqref{first_2}.

The above formal derivation can be converted into a rigorous proof after we discretize in space. Before going on, let us explain the difference when using other boundary conditions, both for the theorem and for the formal derivation above. For the periodic boundary conditions, everything is exactly the same. 
When using Dirichlet or Robin boundary conditions, we do not need the average equals to zero when defining the operator $\mathcal{L}$ (but still need for $\mathcal{L}^n$). 
For the energy dissipation for Robin boundary conditions, we need an extra term $\displaystyle \int_{\partial\Omega}\epsilon\alpha\beta|\phi|^2\md S$, which can be dealt with in the same way as $\displaystyle \int_{\Omega}\epsilon|\nabla\phi|^2\md \bm{x}$. 
Thus, we will still focus on the Neumann boundary conditions below.

We now discuss how to construct spatial discretizations which preserve the nice properties for  the  scheme  \eqref{first_1}-\eqref{first_2}.
Note that in the proof of {\bf Theorem 3.1}, we have used non-standard functions like $\log c_i^{n+1}$ as test function. Therefore, the proof can not be directly extended to a straightforward discretization in space since the discrete version of $\log c_{i}^{n+1}$ is usually not in the discrete test space.
We need to carefully discretize the space to keep the properties stated in Theorem \ref{semidis} in the discrete sense. 

Let us first discuss Galerkin type discretizations with finite-elements or spectral methods. 
Since there are differential operators with variable coefficients, we need to define a discrete inner product, i.e. numerical integration, on a finite set of points $Z=\{\bm{z}\}$: 
\begin{equation}\label{numint}
  [u,v]=\sum_{\bm{z}\in Z} \beta_{\bm{z}}u(\bm{z})v(\bm{z}), 
\end{equation}
where we require that the weights $\beta_{\bm{z}}>0$. 
For finite element methods, the sum should be understood as  $\sum_{K\subset \mathcal{T}}\sum_{\bm{z}\in Z(K)}$  where $\mathcal{T}$ is a given triangulation. 

As we have mentioned, we still consider Neumann boundary conditions. Let $X_M\subset H^1(\Omega)$ be a finite dimensional approximation space. 
Assume that there is a unique function $\psi_{\bm{z}}(\bm {x})$ in $X_M$ satisfying $\psi_{\bm{z}}(\bm{z'})=\delta_{\bm{z}\bm{z'}}$ for $\bm{z},\bm{z'}\in Z$. 
Then, we can define $I_M: C(\Omega) \rightarrow X_M$ as the interpolation operator about the points in $Z$. 

Our Galerkin method  for the first-order scheme \eqref{first_1}-\eqref{first_2} is: to find $\{c_i^{n+1}\}$ and $\phi^{n+1}$ in $X_M$ satisfying 
\begin{align}
  &\Big[\frac{c_i^{n+1}-c_i^n}{\delta t},v\Big]
  =-\Big[D_ic_i^n\nabla\Big(I_M\big(\log c_i^{n+1}+z_i(\phi^{n+1}+\phi_e)\big)\Big),\nabla v\Big],\quad v\in X_M, \label{Glkc}\\
  &(\epsilon\nabla\phi^{n+1},\nabla w)=\Big[\rho_0+\sum_{i=1}^Nz_ic_i^{n+1},w\Big],\quad w\in X_M. \label{Glkphi}
\end{align}
We emphasize that in the above, $(\cdot,\cdot)$ represents the continuous $L^2$ inner product, while $[\cdot,\cdot]$ represents the discrete $L^2$ inner product defined in \eqref{numint}.
\begin{theorem}\label{Glkdis}
The fully discretized scheme \eqref{Glkc}-\eqref{Glkphi} enjoys the following properties:
  \begin{enumerate}
  \item Mass conservation: 
    $$
    [c_i^{n+1},1]=[c_i^n,1]. 
    $$
  \item Unique solvability:  the scheme  \eqref{Glkc}-\eqref{Glkphi} possesses a unique solution $(\{c_i^{n+1}\in X_M\},\phi^{n+1}\in X_M)$. 
   \item Positivity preserving: if $c_i^n(\bm{z})>0$ for all $i$ and $\bm{z}\in Z$, we have $c_i^{n+1}(\bm{z})>0$ for all $i$ and $\bm{z}\in Z$. 
  \item Energy dissipation:    
    \begin{equation}\label{dis_diss}
    {\tilde{E}^{n+1}-\tilde{E}^n}\le -\delta t\sum_{i=1}^N[D_ic_i^n\nabla\mu_i^{n+1},\nabla\mu_i^{n+1}], 
    \end{equation} 
    where $\mu_i^{n+1}=I_M\big(\log c_i^{n+1}+z_i(\phi^{n+1}+\phi_e)\big)$ and the discrete energy is defined as
    \begin{equation}
      \tilde{E}^n=\sum_{i=1}^N[c_i^{n}\log c_i^{n}-c_i^{n},1]+\left[\rho_0+\sum_{i=1}^Nz_ic_i^{n},\frac{1}{2}\phi^{n}+\phi_e\right]. 
    \end{equation}
  \end{enumerate}
\end{theorem}
\begin{proof}
  The mass conservation is obtained by choosing $v=1$. 

  Next, we look the  unique solvability and positivity. 
  Since we have $c_i^n(\bm {x})=\sum_{\bm{z}}c_i^n(\bm{z})\psi_{\bm{z}}(\bm {x})$, 
  let us denote the vector $(c_i^n(\bm{z}), \;\bm{z}\in Z)$ as $\tilde{c}_i^n$. 
  Similarly we denote  $({\phi}^{n}(\bm{z}),\bm{z}\in Z)$ and
  $({\phi}_e^{n}(\bm{z}),\bm{z}\in Z)$  by the vectors $\tilde{\phi}^{n}$ and $\tilde{\phi}_e$, respectively. 
  We define the following stiffness and mass matrices: 
  \begin{align*}
   A_i^n=[D_ic_i^n\nabla\psi_{\bm{z}},\nabla\psi_{\bm{z'}}], \quad 
    A=\epsilon(\nabla\psi_{\bm{z}},\nabla\psi_{\bm{z'}}),\quad 
    B=[\psi_{\bm{z}},\psi_{\bm{z'}}].
      \end{align*}
It is clear that $B$ is a  diagonal matrix with positive elements, $A$ is symmetric positive semi-definite. If $c_i^n(\bm{z})>0$ for $\bm{z}\in Z$, the matrices $A_i^n$   are symmetric positive semi-definite. 
Furthermore, $A_i^n \tilde x=0$, similarly $A\tilde x=0$, if and only if all the components of $\tilde x$ are equal. Therefore,  $A_i^n$ and $A$ have one zero eigenvalue with all other eigenvalues being positive. Hence,   
  the eigen-decomposition of $A$ takes the form $A=T^t\Lambda T$ with $\Lambda=\mbox{diag}(0,\lambda_2,\cdots,\lambda_M)$ and $\lambda_j>0$ for $j=2,\cdots,M$. We denote by $A^*$ the pseudo-inverse given by $A^*=T^t\mbox{diag}(0,\lambda_2^{-1},\cdots,\lambda_M^{-1})T$. Similarly we can define $(A_i^n)^*$ for $i=1,\cdots,N$. 
With the above notations, we can rewrite the scheme \eqref{Glkc}-\eqref{Glkphi} in matrix form as follows: 
  \begin{align}
    &\frac{1}{\delta t}B(\tilde{c}_i^{n+1}-\tilde{c}_i^{n})=-A_i^n\big(\log \tilde{c}_i^{n+1}+z_i(\tilde{\phi}^{n+1}+\tilde{\phi}_e)\big), \\
    &A\tilde{\phi}^{n+1}=B\left(\rho_0+\sum_{i=1}^Nz_i\tilde{c}_i^{n+1}\right). \label{tphi0}
  \end{align}
  
  Multiplying the above equations by pseudo-inverse $(A_i^n)^*$ and $A^*$, we find
  \begin{align}
    &\frac{1}{\delta t}(A_i^n)^*B(\tilde{c}_i^{n+1}-\tilde{c}_i^{n})+\log \tilde{c}_i^{n+1}+z_i(\tilde{\phi}^{n+1}+\tilde{\phi}_e)=\lambda_i\bm{1}, \\
    &\tilde{\phi}^{n+1}=A^*B(\rho_0+\sum_{i=1}^Nz_i\tilde{c}_i^{n+1})+\lambda\bm{1}, \label{tphi}
  \end{align}
  with $\bm{1}$ representing the all-one vector. 
  Eliminating $\tilde{\phi}^{n+1}$ from the above, and  then multiplying  $B$ to the first equation, we arrive at 
  \begin{align*}
    &\frac{1}{\delta t}B(A_i^n)^*B(\tilde{c}_i^{n+1}-\tilde{c}_i^{n})+B\log \tilde{c}_i^{n+1}+z_i\Big(BA^*B(\rho_0+\sum_{i=1}^Nz_i\tilde{c}_i^{n+1})+B\tilde{\phi}_e\Big)=\lambda_i'B\bm{1}, 
  \end{align*}
  along with the mass conservation $\bm{1}^tB(\tilde{c}_i^{n+1}-\tilde{c}_i^n)=0$. 
  One can then easily check that the above  is the Euler-Lagrange equation of the function
  \begin{align*}
    \tilde{F}[\tilde{c}_i^{n+1}]=&\frac{1}{2\delta t}\sum_{i=1}^N(\tilde{c}_i^{n+1}-\tilde{c}_i^{n})^tB(A_i^n)^*B(\tilde{c}_i^{n+1}-\tilde{c}_i^{n})
    +\sum_{i=1}^N(\tilde{c}_i^{n+1})^tB(\log \tilde{c}_i^{n+1}-1)\\
    &+\frac{1}{2}\left(\rho_0+\sum_{i=1}^Nz_i\tilde{c}_i^{n+1}\right)^tBA^*B\left(\rho_0+\sum_{i=1}^Nz_i\tilde{c}_i^{n+1}\right)+\tilde{\phi}_e^tB\left(\rho_0+\sum_{i=1}^Nz_i\tilde{c}_i^{n+1}\right). 
  \end{align*}
 Since $B$ is diagonal and positive definite, and $(A_i^n)^*,\,A^*$ are symmetric and nonnegative,  it is clear that the above function is strictly convex about $\tilde{c}_i^{n+1}$.
 Therefore, $\tilde{F}[\tilde{c}_i^{n+1}]$ has a unique minimizer. 
 Below we eliminate the possibility of $\tilde{c}_i^{n+1}(\bm{z})=0$. 
 If this is done, the unique minimizer satisfies $\{\tilde{c}_i^{n+1}>0\}_{i=1,\cdots,N}$. 
 With $\tilde{c}_i^{n+1}$, we can then determine a unique $\tilde \phi^{n+1}$ from \eqref{tphi0}. 

 Let us prove by contradiction. Without loss of generality, suppose the minimizer has $\tilde{c}_1^{n+1}(\bm{z})=0$. Choose another $\bm{z'}$ such that $\tilde{c}_1^{n+1}(\bm{z'})>0$. Keep the other $\tilde{c}_i^{n+1}$, and substitute $\tilde{c}_1^{n+1}$ by $\tilde{d}_1^{n+1}=\tilde{c}_1+\beta_{\bm{z'}}\rho\bm{e}_{\bm{z}}-\beta_{\bm{z}}\rho\bm{e}_{\bm{z'}}$, where we use $\bm{e}_{\bm{z}}$ to denote the vector with the entry one for the $\bm{z}$-component and zero entry for others. 
Next, we will show that when $\rho$ is small enough, $\tilde{F}[\tilde{d}_1^{n+1},\tilde{c}_i^{n+1}|_{i=2,\ldots,n}]<\tilde{F}[\tilde{c}_i^{n+1}]$. 
In the following, we denote two quantities in the inequality in short by $\tilde{F}[\tilde{d}_1^{n+1}]$ and $\tilde{F}[\tilde{c}_1^{n+1}]$. 

Split $\tilde{F}$ into two parts: 
$$
\tilde{F}_1=\sum_{i=1}^N(\tilde{c}_i^{n+1})^tB(\log \tilde{c}_i^{n+1}-1), 
$$
and $\tilde{F}_2=\tilde{F}-\tilde{F}_1$. 
Note that $\tilde{F}_2$ is a quadratic function. Thus, there exists a constant $A_1>0$ such that for $\rho$ small enough, 
$$
|\tilde{F}_2[\tilde{d}_1^{n+1}]-\tilde{F}_2[\tilde{c}_1^{n+1}]|<A_1\rho. 
$$
Now we turn to $\tilde{F}_1$. Let $a=\tilde{c}_1^{n+1}(\bm{z'})>0$. We can calculate that 
$$
\tilde{F}_1[\tilde{d}_1^{n+1}]-\tilde{F}_1[\tilde{c}_1^{n+1}]=
\beta_{\bm{z}}\beta_{\bm{z'}}\rho\log(\beta_{\bm{z'}}\rho)+\beta_{\bm{z'}}\big((a-\beta_{\bm{z}}\rho)\log(a-\beta_{\bm{z}}\rho)-a\log a\big). 
$$
Since $a>0$, for $\rho$ small enough, we have 
$$
|(a-\beta_{\bm{z}}\rho)\log(a-\beta_{\bm{z}}\rho)-a\log a|<A_2\rho. 
$$
Thus, if we choose $\beta_{\bm{z}}\beta_{\bm{z'}}\log(\beta_{\bm{z'}}\rho)<-A_1-\beta_{\bm{z'}}A_2$, we arrive at $\tilde{F}[\tilde{d}_1^{n+1}]<\tilde{F}[\tilde{c}_1^{n+1}]$, which is the contradiction we want. 

It remains to prove  the energy dissipation. To this end, we choose $v=\delta t\mu_i^{n+1}=\delta tI_M\big(\log c_i^{n+1}+z_i(\phi^{n+1}+\phi_e)\big)$ in \eqref{Glkc} and take the sum for $1\le i\le N$, leading to 
  \begin{align*}
    &-\delta t\sum_{i=1}^N[D_ic_i^n\nabla\mu_i^{n+1},\nabla\mu_i^{n+1}]\\
    =&\sum_{i=1}^N\big[c_i^{n+1}-c_i^n,I_M\big(\log c_i^{n+1}+z_i(\phi^{n+1}+\phi_e)\big)\big]\\
    =&\sum_{i=1}^N[c_i^{n+1}-c_i^n,\log c_i^{n+1}]+\left[\rho_0+\sum_{i=1}^Nz_ic_i^{n+1}-\rho_0-\sum_{i=1}^Nz_ic_i^n,\phi^{n+1}+\phi_e\right].
  \end{align*}
  Then, by using \eqref{Glkphi} and \eqref{iden2}, we have
  \begin{align*}
    &2\left[\rho_0+\sum_{i=1}^Nz_ic_i^{n+1}-\rho_0-\sum_{i=1}^Nz_ic_i^n,\phi^{n+1}\right]\\
    =&2(\nabla\phi^{n+1}-\nabla\phi^n,\epsilon\nabla\phi^{n+1}) \\
    =&\Big((\nabla\phi^{n+1},\epsilon\nabla\phi^{n+1})-(\nabla\phi^{n},\epsilon\nabla\phi^n)+\big(\nabla(\phi^{n+1}-\phi^n),\epsilon\nabla(\phi^{n+1}-\phi^n)\big)\Big)\\
    =&\left[\rho_0+\sum_{i=1}^Nz_ic_i^{n+1},\phi^{n+1}\right]-\left[\rho_0+\sum_{i=1}^Nz_ic_i^{n},\phi^{n}\right]+\big(\nabla(\phi^{n+1}-\phi^n),\epsilon\nabla(\phi^{n+1}-\phi^n)\big). 
  \end{align*}
 We can then obtain \eqref{dis_diss} by using \eqref{logdiss}. 
\end{proof}
\begin{rem}
  For Dirichlet boundary conditions on $\phi$, we just need to change the function space for $\phi$ and $w$ from $X_M$ to $X_{M0}$ requiring that the boundary value is zero. For Robin boundary conditions on $\phi$, we just need to add the surface integral in \eqref{Glkphi}. 
\end{rem}

Let us now briefly discuss how to construct finite difference schemes which preserve the properties of the time discretizations in the last section. 
An important aspect in finite difference schemes is to carefully implement the boundary conditions such that the summation by parts holds, which is crucial to guarantee the mass conservation (cf. \cite{flavell2014conservative} for comparison of non-conservative vs conservative discretization) and to derive the energy dissipation. This is not difficult on rectangular domains. We write down the 2D case, which is to be used in our numerical test, with the domain $[0,L]^2$ discretized at $M^2$ points $x_{j,k}=\Big((j-\frac{1}{2})\delta x,(k-\frac{1}{2})\delta x\Big),\; j,k=1,\cdots,M$ where $\delta x=L/M$. The scheme is written as 
\begin{align}
 &\frac{(c_i)_{j,k}^{n+1}-(c_i)_{j,k}^n}{\delta t}
  =\frac{D_i}{\delta x^2}\Big[\frac{(c_i)_{j+1,k}^{n}+(c_i)_{j,k}^{n}}{2}\Big((\mu_i)_{j+1,k}^{n+1}-(\mu_i)_{j,k}^{n+1}\Big)\label{FD1}\\
    &\qquad-\frac{(c_i)_{j,k}^n+(c_i)_{j-1,k}^n}{2}\Big((\mu_i)_{j,k}^{n+1}-(\mu_i)_{j-1,k}^{n+1}\Big),\nonumber\\
    &\qquad+\frac{(c_i)_{j,k+1}^{n}+(c_i)_{j,k}^{n}}{2}\Big((\mu_i)_{j,k+1}^{n+1}-(\mu_i)_{j,k}^{n+1}\Big)\nonumber\\
    &\qquad-\frac{(c_i)_{j,k}^n+(c_i)_{j,k-1}^n}{2}\Big((\mu_i)_{j,k}^{n+1}-(\mu_i)_{j,k-1}^{n+1}\Big)\Big],\;1\le j,k\le M,\; 1\le i\le N, \nonumber\\
  &-\epsilon\frac{\phi_{j+1,k}^{n+1}+\phi_{j-1,k}^{n+1}+\phi_{j,k+1}^{n+1}+\phi_{j,k-1}^{n+1}-4\phi_{j}^{n+1}}{h^2}=\rho_0+\sum_{i=1}^Nz_i(c_i)_{j,k}^{n+1}, \;1\le j,k\le M, \label{FD2}
\end{align}
where $(\mu_i)_{j,k}^n=\big(\log c_i+z_i(\phi+\phi_e)\big)_{j,k}^n$. 
To fix the idea, we still consider the Neumann boundary conditions. To have the summation by parts, we shall impose boundary terms like below, 
\begin{equation}\label{FD3}
\begin{split}
&\frac{(\mu_i)_{0,k}^{n+1}-(\mu_i)_{1,k}^{n+1}}{h}=0,\quad \frac{(\mu_i)_{M+1,k}^{n+1}-(\mu_i)_{M,k}^{n+1}}{h}=0, \\
&\frac{\phi_{0,k}^{n+1}-\phi_{1,k}^{n+1}}{h}=0,\quad \frac{\phi_{M+1,k}^{n+1}-\phi_{M,k}^{n+1}}{h}=0.
\end{split}
\end{equation} 
The above boundary discretization is for $\partial u/\partial\bm{n}|_{\partial\Omega}$. The term $u|_{\partial\Omega}$ shall be discretized by $\frac{1}{2}(u_{0,k}+u_{1,k})$ for the summation by parts, if we consider Dirichlet or Robin boundary conditions on $\phi$. 

For the above scheme, we have
\begin{theorem}
The finite difference scheme \eqref{FD1}-\eqref{FD3} enjoys the following properties:
  \begin{enumerate}
  \item Mass conservation: 
    $$
    \delta x^2\sum_{j,k=1}^M(c_i)_{j,k}^{n+1}=\delta x^2\sum_{j,k=1}^M(c_i)_{j,k}^n,\; 1\le i\le N. 
    $$
  \item Unique solvability:  the scheme  \eqref{FD1}-\eqref{FD3} possesses a unique solution $(\{(c_i)_{j,k}^{n+1}\},\phi_{j,k}^{n+1})$. 
   \item Positivity preserving: if $(c_i)_{j,k}^n>0$ for all $i$ and $(j,k)$, we have $(c_i)_{j,k}^{n+1}>0$ for all $i$ and $(j,k)$. 
  \item Energy dissipation:    we have 
    \begin{align}
    {\tilde{E}^{n+1}-\tilde{E}^n}\le -\delta t\sum_{i=1}^N\frac{D_i}{\delta x^2}&\sum_{\substack{1\le j\le M-1\\1\le k\le M}}\frac{(c_i)_{j+1,k}^{n}+(c_i)_{j,k}^{n}}{2}\Big((\mu_i)_{j+1,k}^{n+1}-(\mu_i)_{j,k}^{n+1}\Big)^2\nonumber\\
    &+\sum_{\substack{1\le j\le M\\1\le k\le M-1}}\frac{(c_i)_{j,k+1}^{n}+(c_i)_{j,k}^{n}}{2}\Big((\mu_i)_{j,k+1}^{n+1}-(\mu_i)_{j,k}^{n+1}\Big)^2, 
    \end{align} 
    where the discrete energy is defined as 
    \begin{equation}
      \tilde{E}^n=\sum_{i=1}^N\sum_{j,k=1}^M(c_i)_{j,k}^{n}(\log (c_i)_{j,k}^{n}-1)+\sum_{j,k=1}^M\Big[\rho_0+\sum_{i=1}^Nz_i(c_i)_{j,k}^{n}\Big]\cdot\Big[\frac{1}{2}\phi_{j,k}^{n}+(\phi_e)_{j,k}\Big]. 
    \end{equation}
  \end{enumerate}
\end{theorem}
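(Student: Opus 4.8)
The plan is to follow the proof of Theorem~\ref{Glkdis} essentially verbatim, with the discrete inner product $[\cdot,\cdot]$ there replaced by the Euclidean sum over grid points and the Galerkin identity \eqref{Glkphi} replaced by the five-point scheme \eqref{FD2}. The one new ingredient is a discrete summation-by-parts formula adapted to the stencils \eqref{FD3}: multiplying the right-hand side of \eqref{FD1} by a grid function $v$, summing over $1\le j,k\le M$ and shifting indices, the boundary terms cancel \emph{exactly} because of the homogeneous Neumann stencils in \eqref{FD3}, leaving only a sum over horizontal and vertical grid edges of the edge-averaged $c_i^n$ times the product of the corresponding nearest-neighbor differences of $\mu_i^{n+1}$ and of $v$; the same identity applied to the five-point Laplacian in \eqref{FD2} gives, for every grid function $w$, $\sum_{j,k}\big(\rho_0+\sum_i z_i (c_i)^{n+1}_{j,k}\big)w_{j,k}=\epsilon\sum_{\text{edges}}\big(\text{diff. of }\phi^{n+1}\big)\big(\text{diff. of }w\big)$. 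Choosing $v\equiv1$ in \eqref{FD1} then yields mass conservation immediately.

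For unique solvability and positivity I would rewrite \eqref{FD1}-\eqref{FD2} in matrix form exactly as in Theorem~\ref{Glkdis}, writing $A_i^n$ for the matrix representing $-\nabla\cdot(D_ic_i^n\nabla\cdot)$ discretized as in \eqref{FD1} and \eqref{FD3}, and $A$ for that of $-\epsilon\Delta$ discretized as in \eqref{FD2}. Both are weighted graph Laplacians of the (connected) uniform grid graph; since the hypothesis $(c_i)^n_{j,k}>0$ makes every edge weight $\tfrac12\big((c_i)^n_{j+1,k}+(c_i)^n_{j,k}\big)$, etc., strictly positive, $A_i^n$ and $A$ are symmetric positive semi-definite with kernel spanned by the all-ones vector, so their pseudo-inverses $(A_i^n)^*$, $A^*$ are well defined. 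Eliminating $\phi^{n+1}$ via \eqref{FD2}, one checks that the scheme is the Euler--Lagrange system, with Lagrange multipliers enforcing $\sum_{j,k}\big((c_i)^{n+1}_{j,k}-(c_i)^n_{j,k}\big)=0$, of the functional
$$
\tilde F[\{\tilde c_i^{n+1}\}]=\frac{1}{2\delta t}\sum_{i=1}^N(\tilde c_i^{n+1}-\tilde c_i^n)^t(A_i^n)^*(\tilde c_i^{n+1}-\tilde c_i^n)+\sum_{i=1}^N\sum_{j,k=1}^M(c_i)^{n+1}_{j,k}\big(\log(c_i)^{n+1}_{j,k}-1\big)+\tfrac12\,r^tA^*r+\langle r,\phi_e\rangle,
$$
where $r=\rho_0+\sum_iz_i\tilde c_i^{n+1}$ and $\langle\cdot,\cdot\rangle$ is the Euclidean product. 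This is strictly convex in $\{\tilde c_i^{n+1}\}$ --- its entropy part is strictly convex, the rest is quadratic with positive semi-definite Hessian --- hence it has a unique minimizer, and that minimizer cannot have a vanishing component by the same perturbation argument as in Theorem~\ref{Glkdis}: moving a small mass $\rho$ from a site where $(c_i)^{n+1}>0$ to a putative zero site changes the quadratic part of $\tilde F$ by $O(\rho)$ but the entropy part by a quantity of order $\rho\log\rho\to-\infty$. Hence $(c_i)^{n+1}_{j,k}>0$ for all $i,j,k$, and $\phi^{n+1}$ is then determined uniquely from \eqref{FD2} under the zero-mean normalization.

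For the energy estimate I would test \eqref{FD1} with $\delta t\,(\mu_i)^{n+1}_{j,k}$, sum over $(j,k)$ and $i$, and apply the summation-by-parts formula: the left-hand side becomes precisely the negative of the right-hand side of the claimed inequality, i.e.\ $-\delta t$ times the edge sum of edge-averaged $c_i^n$ times the squared differences of $\mu_i^{n+1}$. The resulting right-hand side $\sum_i\sum_{j,k}\big((c_i)^{n+1}-(c_i)^n\big)_{j,k}\big(\log(c_i)^{n+1}+z_i(\phi^{n+1}+\phi_e)\big)_{j,k}$ is then bounded below by $\tilde E^{n+1}-\tilde E^n$: the logarithmic terms contribute the entropy increment plus the nonnegative leftover $\tfrac{(a-b)^2}{2\xi}$ from \eqref{logdiss}, the $\phi_e$ terms assemble into the external-potential increment, and the $\phi^{n+1}$ terms are treated as in Theorem~\ref{Glkdis} by invoking \eqref{FD2} together with the discrete form of \eqref{iden2}: doubling the mixed term and using $2a(a-b)=a^2-b^2+(a-b)^2$ on the discrete gradient of $\phi$ produces the $\tfrac12\phi$-part of $\tilde E^{n+1}-\tilde E^n$ plus a nonnegative remainder. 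Collecting signs gives the stated inequality.

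The step I expect to be the main obstacle is the first one: choosing the boundary discretization \eqref{FD3} so that the boundary contributions in the summation by parts vanish exactly (the conservative-versus-non-conservative discretization issue, cf.\ \cite{flavell2014conservative}), and then verifying that $A_i^n$ and $A$ have a one-dimensional kernel --- equivalently, that the only grid functions of zero discrete Dirichlet energy for the weighted stencils are the constants, which is immediate once one observes that the grid graph is connected and all edge weights are strictly positive. Once the summation-by-parts identity and this spectral structure are in hand, the convexity/positivity argument and the energy estimate are, with the obvious changes, those of Theorem~\ref{Glkdis}.
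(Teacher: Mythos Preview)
Your proposal is correct and follows essentially the same approach as the paper's own proof, which itself merely says ``similar to Theorem~\ref{Glkdis} by choosing the matrices as those given by finite difference discretization'' and ``multiplying \eqref{FD1} with $(\mu_i)_{j,k}^{n+1}$ and taking the sum.'' You have in fact supplied more detail than the paper does---in particular, the explicit identification of $A_i^n$ and $A$ as connected weighted graph Laplacians with strictly positive edge weights (hence one-dimensional kernel) is exactly the structure needed, and your description of the summation-by-parts identity and the handling of the $\phi^{n+1}$ terms via \eqref{FD2} and \eqref{iden2} matches the paper's sketch.
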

\begin{proof}
  The mass conservation is obtained by taking the sum over $1\le j,k\le M$ on \eqref{FD1} and using the boundary conditions of $(\mu_i)_{j,k}^{n+1}$ in \eqref{FD3}. 

  The unique solvability and positivity can be proved similar to Theorem \ref{Glkdis} by choosing the matrices as those given by finite difference discretization. 

  The energy dissipation is derived by multiplying \eqref{FD1} with $(\mu_i)_{j,k}^{n+1}$ and taking the sum over $1\le j,k\le M$. On the right-hand side, the summation by parts is then done by noting the boundary conditions of $(\mu_i)_{j,k}^{n+1}$. 
On the left-hand side, we deal with the terms with $\phi_{j,k}^{n+1}$ in the same way as the last equation in the proof of Theorem \ref{Glkdis}, using \eqref{FD2}. 
\end{proof}

\subsection{Second-order scheme}
Apparently we can use second-order BDF scheme with Adams-Bashforth extrapolation to construct a second-order scheme. However, since the Adams-Bashforth extrapolation can not preserve positivity, we need to  modify it with
\begin{equation}
  \bar{c}_i=\left\{
  \begin{array}{ll}
    2c_i^n-c_i^{n-1},& \text{if }c_i^n\ge c_i^{n-1}, \\
    \frac{1}{2/c_i^n-1/c_i^{n-1}},& \text{if }c_i^n<c_i^{n-1}. 
  \end{array}
  \right.
\end{equation}
Then, a second order fully-discretized scheme can be written as follows:
to find $\{c_i^{n+1}\}$ and $\phi^{n+1}$ in $X_M$ satisfying 
\begin{align}
  &\Big[\frac{3c_i^{n+1}-4c_i^n+c_i^{n-1}}{2\delta t},v\Big]
  =-\Big[D_i\bar{c}_i\nabla\Big(I_M\big(\log c_i^{n+1}+z_i(\phi^{n+1}+\phi_e)\big)\Big),\nabla v\Big],\quad v\in X_M, \label{Glkc2}\\
  &(\epsilon\nabla\phi^{n+1},\nabla w)=\Big[\rho_0+\sum_{i=1}^Nz_ic_i^{n+1},w\Big],\quad w\in X_M. \label{Glkphi2}
\end{align}
Similar to the first-order scheme, we have 
\begin{theorem}\label{Glkdis2}
The fully discretized scheme \eqref{Glkc2}-\eqref{Glkphi2} enjoys the following properties:
  \begin{enumerate}
  \item Mass conservation: 
    $$
    [c_i^{n+1},1]=[c_i^n,1]. 
    $$
  \item Unique solvability:  the scheme  \eqref{Glkc2}-\eqref{Glkphi2} possesses a unique solution $(\{c_i^{n+1}\in X_M\},\phi^{n+1}\in X_M)$. 
   \item Positivity preserving: $c_i^{n+1}(\bm{z})>0$ for all $i$ and $\bm{z}\in Z$. 
  \end{enumerate}
\end{theorem}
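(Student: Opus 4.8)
The plan is to mirror the proof of Theorem~\ref{Glkdis}, adapting the convex-functional argument to the three-level BDF2 discretization. First I would establish mass conservation by choosing $v=1$ in \eqref{Glkc2}: since $\nabla 1 = 0$, the right-hand side vanishes and we get $[3c_i^{n+1}-4c_i^n+c_i^{n-1},1]=0$; invoking the inductive hypothesis $[c_i^n,1]=[c_i^{n-1},1]$ this collapses to $[c_i^{n+1},1]=[c_i^n,1]$. For unique solvability and positivity, the key observation is that the modified extrapolation $\bar{c}_i$ is, at each node $\bm{z}$, either a convex combination yielding a positive number (when $c_i^n\ge c_i^{n-1}$, note $2c_i^n - c_i^{n-1} > c_i^n > 0$) or a harmonic-type average $1/(2/c_i^n - 1/c_i^{n-1})$ which is positive precisely when $c_i^n < c_i^{n-1}$ (then $2/c_i^n > 1/c_i^{n-1}$, so the denominator is positive); in both cases $\bar{c}_i(\bm{z})>0$ for all $\bm{z}\in Z$. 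This positivity of $\bar{c}_i$ is exactly what is needed for the stiffness matrices $\bar{A}_i^n := [D_i\bar{c}_i\nabla\psi_{\bm{z}},\nabla\psi_{\bm{z'}}]$ to be symmetric positive semi-definite with a one-dimensional kernel spanned by $\bm{1}$, just as $A_i^n$ was in Theorem~\ref{Glkdis}.

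Next I would repeat the reduction to a finite-dimensional variational problem: write the scheme in matrix form, eliminate $\tilde{\phi}^{n+1}$ using \eqref{Glkphi2} and the pseudo-inverse $A^*$, multiply through by $\bar{A}_i^{n*}$ and then by $B$, and recognize the resulting system (together with the mass constraint $\bm{1}^tB(\tilde{c}_i^{n+1}-\tfrac{1}{3}(4\tilde{c}_i^n-\tilde{c}_i^{n-1}))=0$) as the Euler--Lagrange equation of the functional
\begin{align*}
  \tilde{F}[\tilde{c}_i^{n+1}]=&\frac{3}{4\delta t}\sum_{i=1}^N\Big(\tilde{c}_i^{n+1}-\tfrac{1}{3}(4\tilde{c}_i^n-\tilde{c}_i^{n-1})\Big)^tB(\bar{A}_i^{n})^*B\Big(\tilde{c}_i^{n+1}-\tfrac{1}{3}(4\tilde{c}_i^n-\tilde{c}_i^{n-1})\Big)\\
  &+\sum_{i=1}^N(\tilde{c}_i^{n+1})^tB(\log\tilde{c}_i^{n+1}-1)
  +\frac{1}{2}\Big(\rho_0+\sum_{i=1}^Nz_i\tilde{c}_i^{n+1}\Big)^tBA^*B\Big(\rho_0+\sum_{i=1}^Nz_i\tilde{c}_i^{n+1}\Big)\\
  &+\tilde{\phi}_e^tB\Big(\rho_0+\sum_{i=1}^Nz_i\tilde{c}_i^{n+1}\Big).
\end{align*}
Since $B$ is diagonal positive definite and $(\bar{A}_i^n)^*,A^*$ are symmetric nonnegative, the quadratic part is convex, and the entropy term $\sum_i(\tilde{c}_i^{n+1})^tB(\log\tilde{c}_i^{n+1}-1)$ is strictly convex, so $\tilde{F}$ has a unique minimizer. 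To show the minimizer has no zero component, I would reuse verbatim the contradiction argument from Theorem~\ref{Glkdis}: perturb a hypothetical zero node against a positive node in a mass-preserving direction, bound the change in the quadratic part $\tilde{F}_2$ by $O(\rho)$, and note the change in the entropy part $\tilde{F}_1$ contains a term $\beta_{\bm{z}}\beta_{\bm{z'}}\rho\log(\beta_{\bm{z'}}\rho)\to -\infty$ faster than $O(\rho)$, forcing $\tilde{F}$ to decrease --- a contradiction. Hence $c_i^{n+1}(\bm{z})>0$ for all $i$ and $\bm{z}$, and then $\tilde{\phi}^{n+1}$ is recovered uniquely from \eqref{Glkphi2}.

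The only genuinely new point compared to Theorem~\ref{Glkdis} is verifying that the modified extrapolation $\bar{c}_i$ stays strictly positive nodewise and therefore keeps the stiffness matrices positive semi-definite; this is the step I expect to be the main obstacle, though it is really just the elementary case analysis indicated above. Everything else --- the matrix reformulation, the pseudo-inverse elimination, the strict convexity, and the no-zero-component argument --- transfers directly, with $A_i^n$ replaced by $\bar{A}_i^n$ and the single-step increment $c_i^{n+1}-c_i^n$ replaced by $c_i^{n+1}-\tfrac{1}{3}(4c_i^n-c_i^{n-1})$. I would remark that the $X_{M0}$/surface-integral modifications for Dirichlet and Robin boundary conditions carry over exactly as in the first-order case, and that no energy-dissipation claim is made here because \eqref{logdiss}-type identities do not close for the three-level scheme.
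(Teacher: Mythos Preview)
Your proposal is correct and follows exactly the route the paper intends: the paper does not spell out a separate proof of Theorem~\ref{Glkdis2} but simply indicates it is ``similar to the first-order scheme,'' and your adaptation of the argument of Theorem~\ref{Glkdis} (nodewise positivity of $\bar c_i$, replacement of $A_i^n$ by $\bar A_i^n$, BDF2 increment in the quadratic term, and the identical entropy-blow-up contradiction) is precisely the intended filling-in. One cosmetic correction: in the case $c_i^n\ge c_i^{n-1}$, the quantity $2c_i^n-c_i^{n-1}$ is an affine extrapolation, not a convex combination, and the inequality is $2c_i^n-c_i^{n-1}\ge c_i^n>0$ (equality when $c_i^n=c_i^{n-1}$); this does not affect the argument.
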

\begin{rem}
 Unfortunately, we are unable to prove the energy dissipation. The reason is that we do not have an analog of \eqref{logdiss} to deal with the term
$(3c_i^{n+1}-4c_i^n+c^{n-1}_i,\log c_i^{n+1})$.
\end{rem}

\section{Numerical experiments\label{results}}
In this section, we present several numerical experiments to validate our theoretical results in the previous section. 
We first present two examples to examine accuracy and stability of our schemes. 
In these two examples, the equations are solved in $[0,2\pi]^2$ with periodic boundary conditions and discretized by Fourier spectral method in space. 
We will verify the convergence order as well as the mass conservation, positivity preserving and energy dissipation. 
Then, we present two other examples with Dirichlet and Neumann boundary conditions, one for two species and one for three species, on the domain $[0,1]^2$, discretized with the finite difference scheme \eqref{FD1}-\eqref{FD2}. 

Note that at each time step, the scheme is nonlinear, but it is shown that it possesses a unique solution which is the minimizer of a strictly convex function. Hence, it can be solved efficiently by Newton's iteration method. 
For a given Newton's direction, line search is incorporated to obtain a damped step length. 
We adopt a simple backtracking line search method, to half the step length until the residue of the nonlinear equations decreases, which requires the concentration to be positive since we have logarithm functions in the nonlinear equations. 
The linear system to obtain the Newton's direction is solved using the preconditioned GMRES iteration. 
For Fourier spatial discretization, we utilize the preconditioner given by choosing $\{c_i\}, \phi$ as constant functions. 
For finite difference discretization, the preconditioner is constructed by incomplete LU factorization without filling. 
For both Newton's and GMRES iterations, the tolerance is chosen as $10^{-6}$. 
This approach proves to be quite efficient, as we will present below. 

\begin{figure}
  \centering
  \includegraphics[width=.6\textwidth,keepaspectratio]{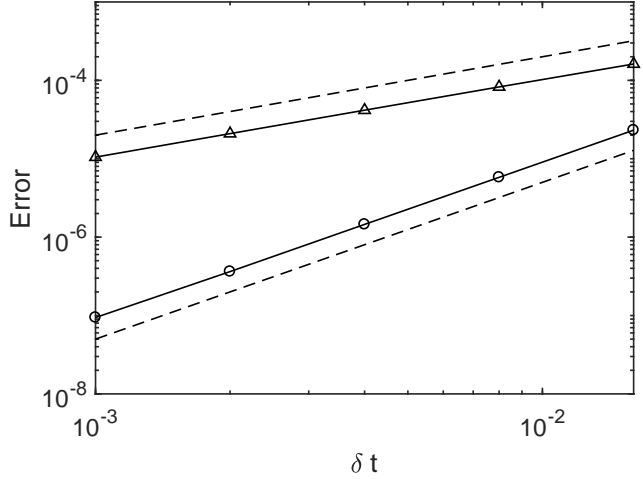}
  \caption{(Example 1) Convergence rate of two schemes (triangle: first order; circle: second order). The error is calculated as $\sqrt{\|p^n(\cdot)-p(\cdot,t^n)\|^2+\|n^n(\cdot)-n((\cdot,t^n)\|^2}$. The dashed lines represent the reference to first and second order convergence. }\label{cnvg}
\end{figure}
\textbf{Example 1} (Accuracy test). 
Let $z_1=1$, $z_2=-1$, $p=c_1$, $n=c_2$, $D_1=D_2=1$, and $\rho_0=0$, like in section \ref{twocomponent}. 
We set the external field $\phi_e=0$ and $\epsilon=1$. 
We use the first-order and second-order schemes with $64\times 64$ Fourier spectral modes for spatial discretization. 
The initial value is chosen as 
\begin{align*}
  p(x,0)=1.1+\sin x\cos y, \quad n(x,0)=1.1-\sin x\cos y. 
\end{align*}
The reference solution is obtained by the second order scheme with $\delta t=10^{-4}$. 
The errors by the two schemes are plotted in Fig.~\ref{cnvg}, which clearly shows the expected first and second order  accuracy. 
\begin{figure}
  \centering
  \includegraphics[width=.45\textwidth,keepaspectratio]{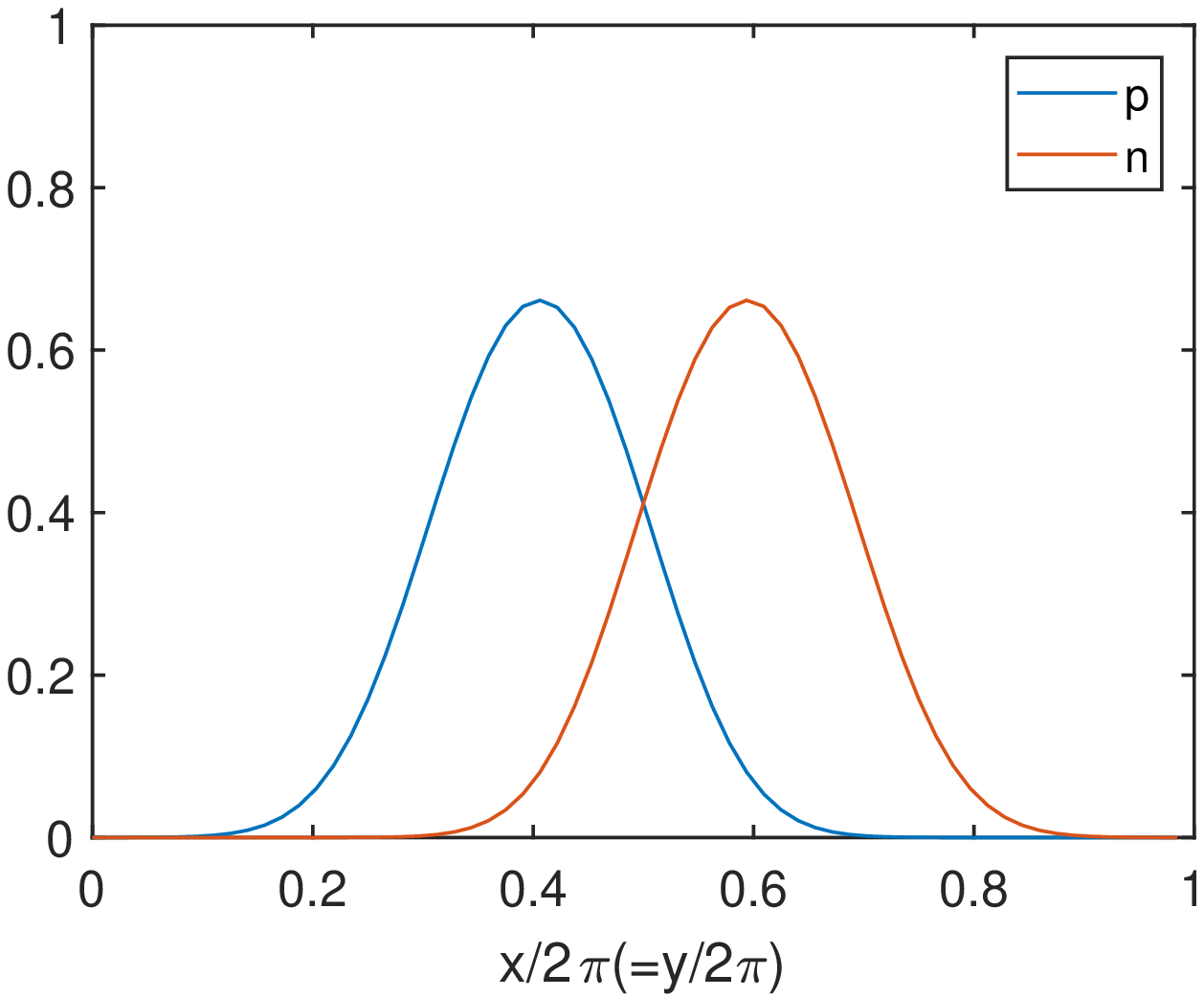}
  \includegraphics[width=.45\textwidth,keepaspectratio]{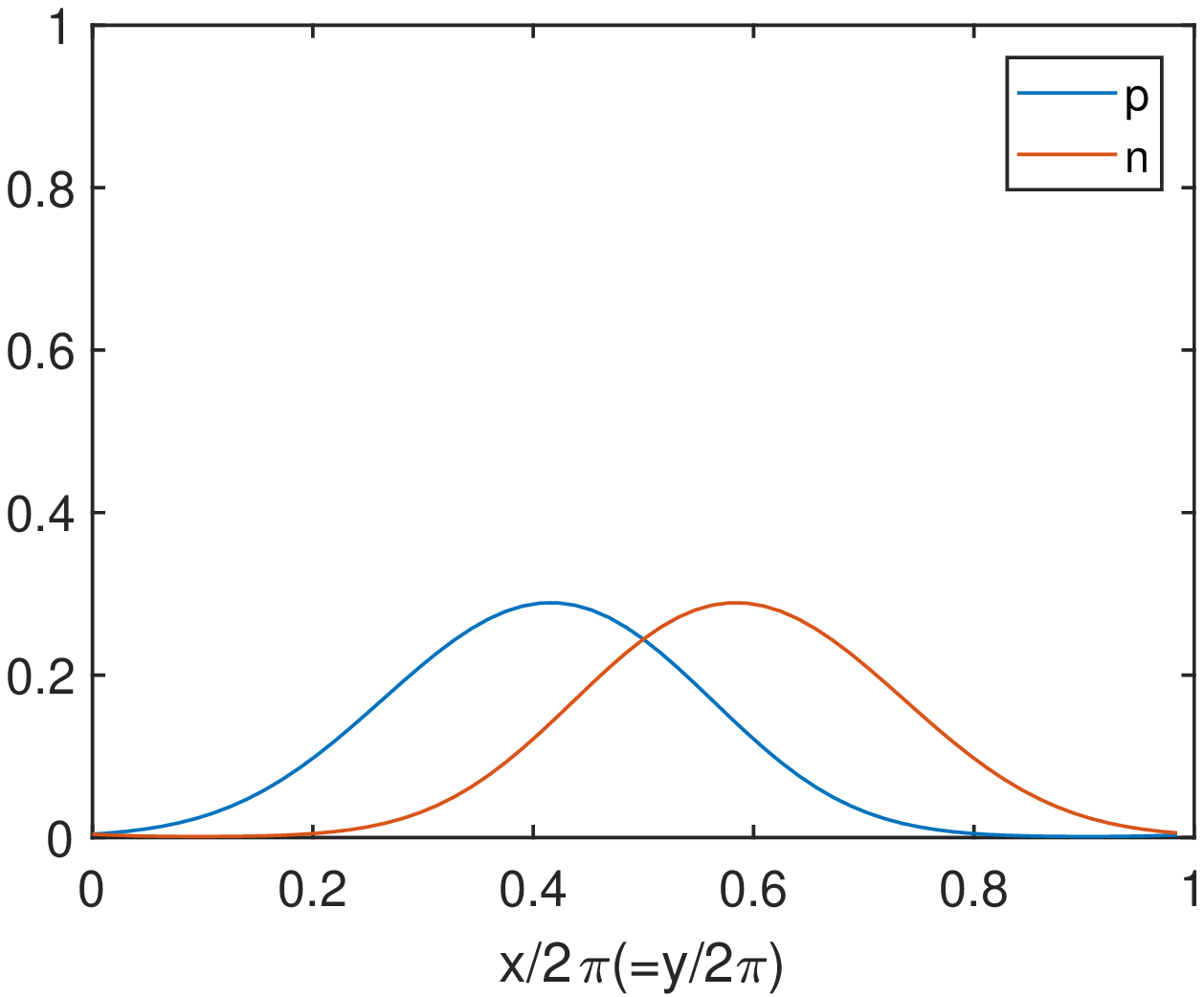}\\
  \includegraphics[width=.45\textwidth,keepaspectratio]{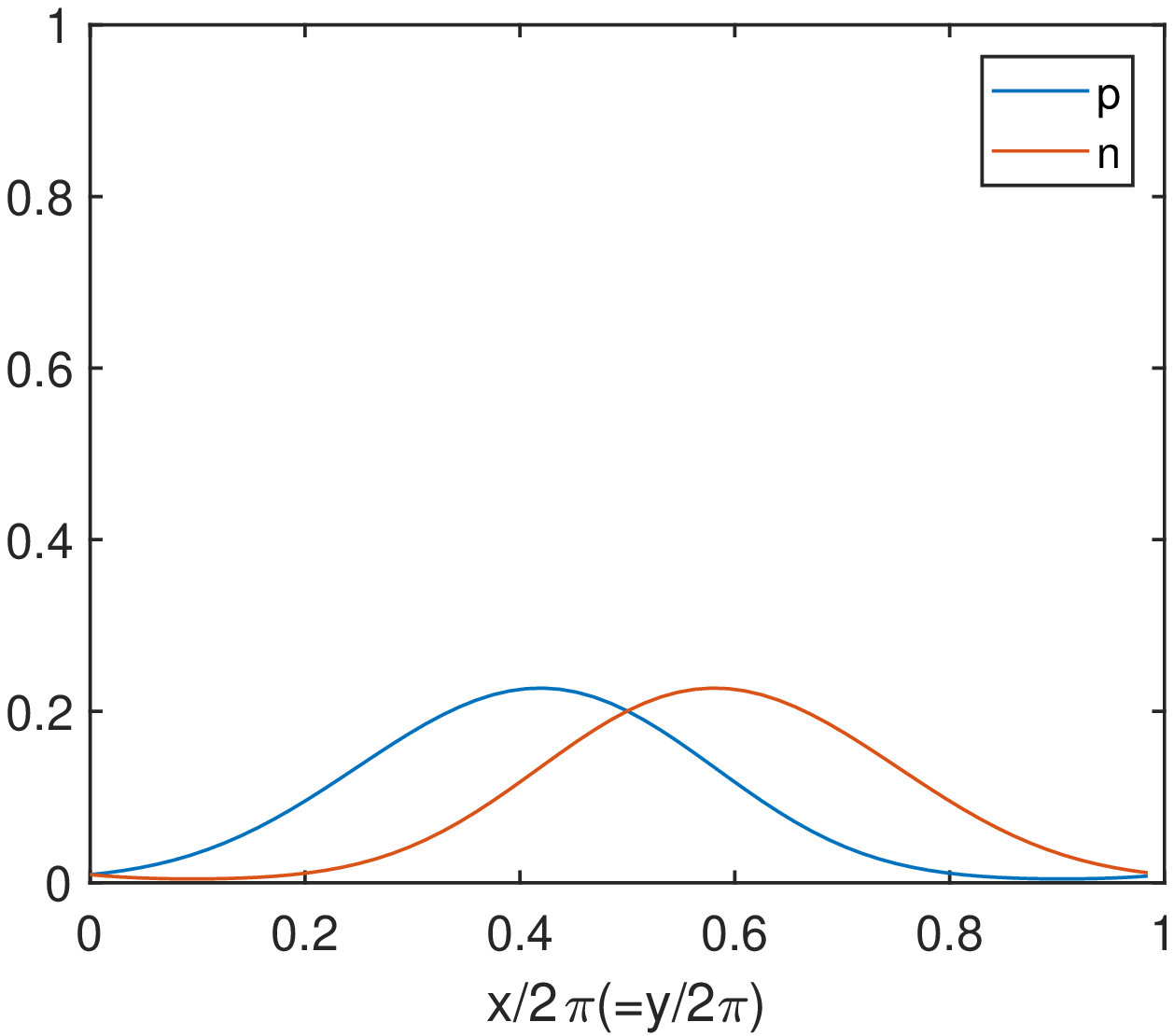}
  \includegraphics[width=.45\textwidth,keepaspectratio]{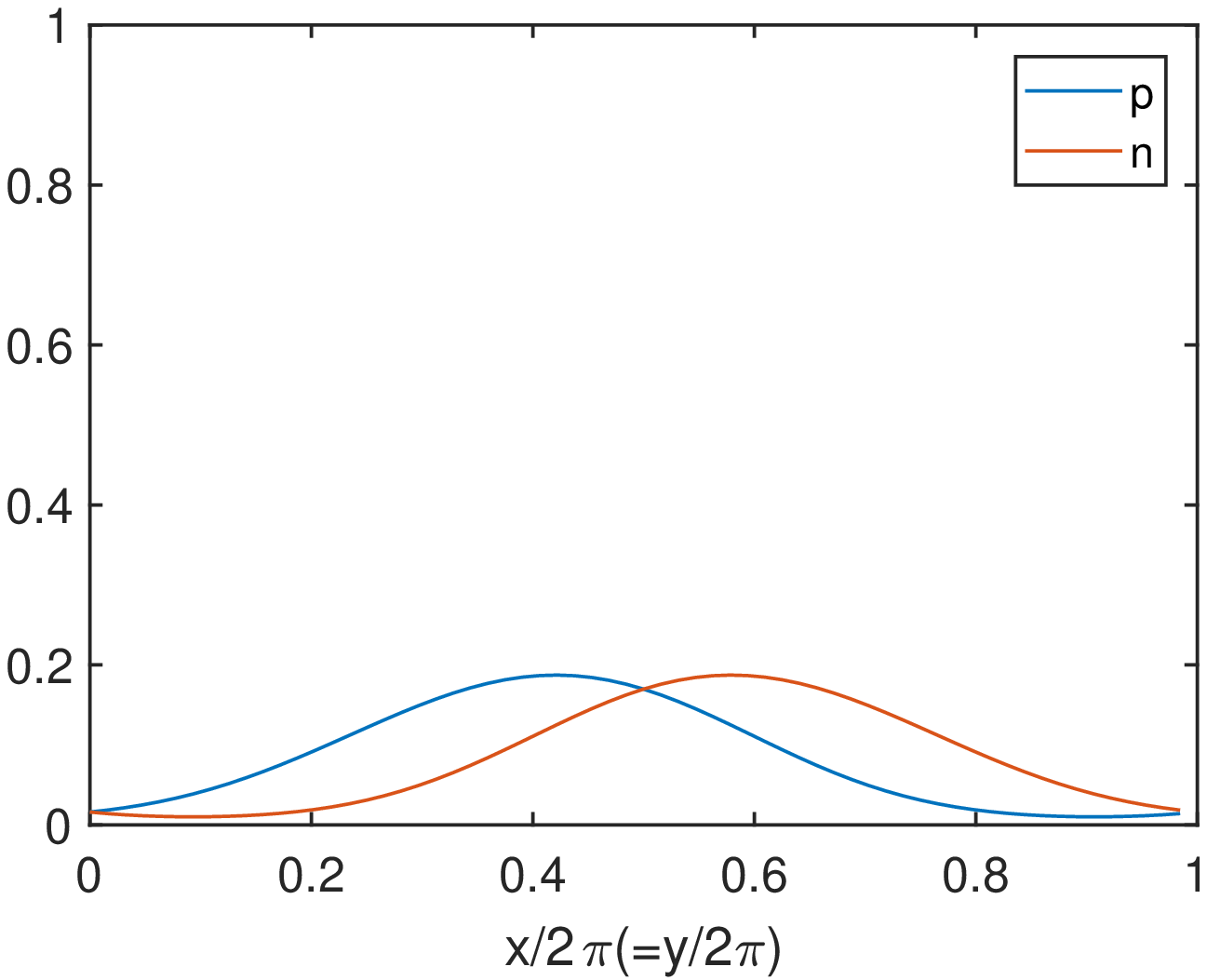}
  \caption{(Example 2) Profiles of $p$ and $n$ on the line $x=y$, at $t=0.2$ (upper-left), $0.6$ (upper-right), $0.8$ (lower-left), $1$ (lower-right). }\label{profile}
\end{figure}

\textbf{Example 2} (Highly disparate initial value). 
The domain, boundary conditions, $\phi_e$, $\epsilon$, and the spatial dicretization are the same as Example 1. 
We choose the initial condition as follows, 
\begin{align*}
  p(x,y,0)&=1+10^{-6}-\tanh\left(2\big((x-0.8\pi)^2+(y-0.8\pi)^2-(0.2\pi)^2\big)\right),\\
  n(x,y,0)&=1+10^{-6}-\tanh\left(2\big((x-1.2\pi)^2+(y-1.2\pi)^2-(0.2\pi)^2\big)\right),
\end{align*}
so that $\min p(x,y,0)=\min n(x,y,0)\approx 10^{-6}$, $\max p(x,y,0)=\max n(x,y,0)\approx 1.65$. 
The initial condition indicates that the positive and negative charged particles accumulates in two regions centered at $(0.8\pi,0.8\pi)$ and $(1.2\pi,1.2\pi)$, respectively. 
By section \ref{twocomponent}, the exact solution satisfies maximum principle and the dissipation of electrostatic potential. 
\begin{figure}
  \centering
 $$ \includegraphics[width=.35\textwidth,keepaspectratio]{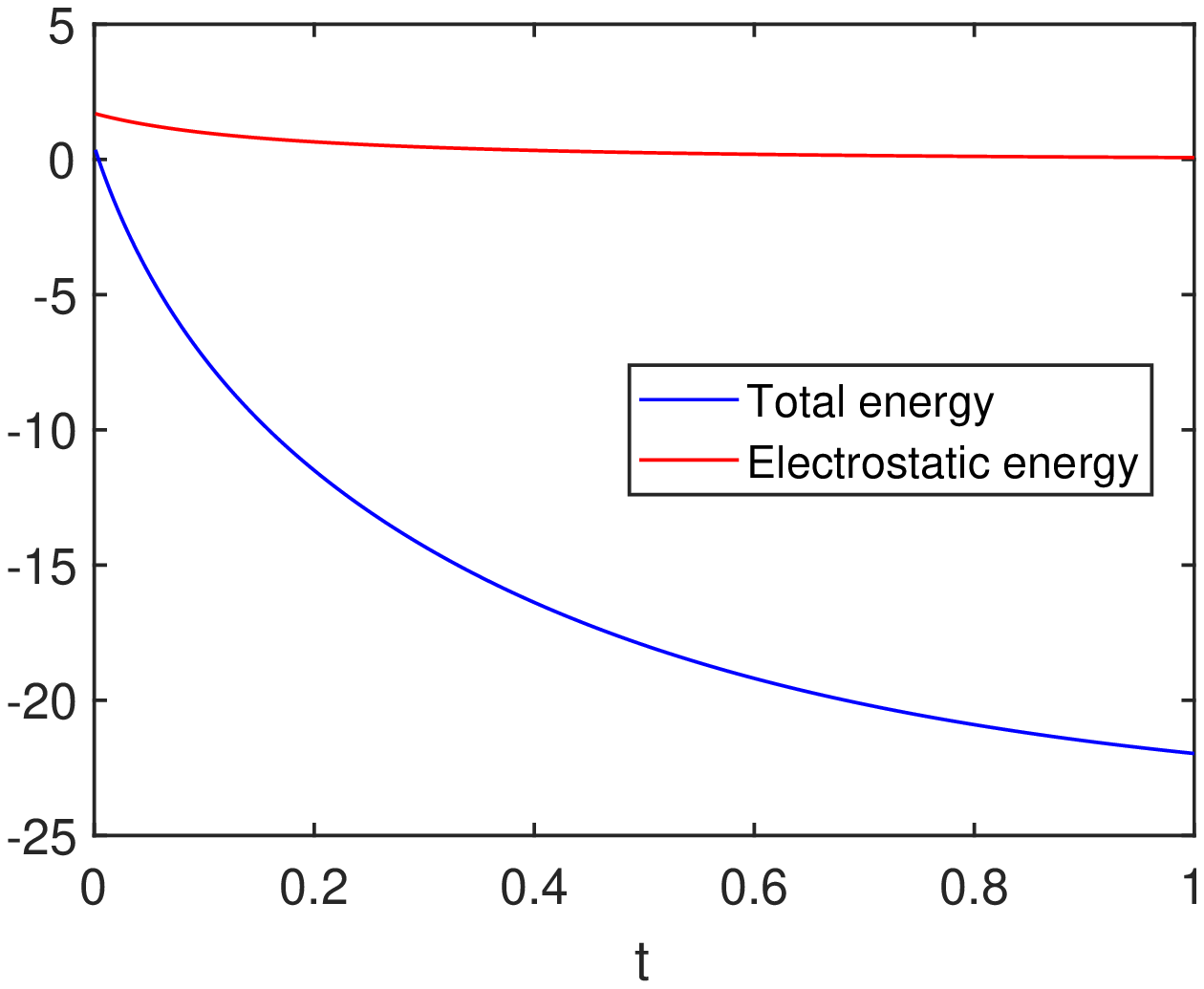}
  \includegraphics[width=.35\textwidth,keepaspectratio]{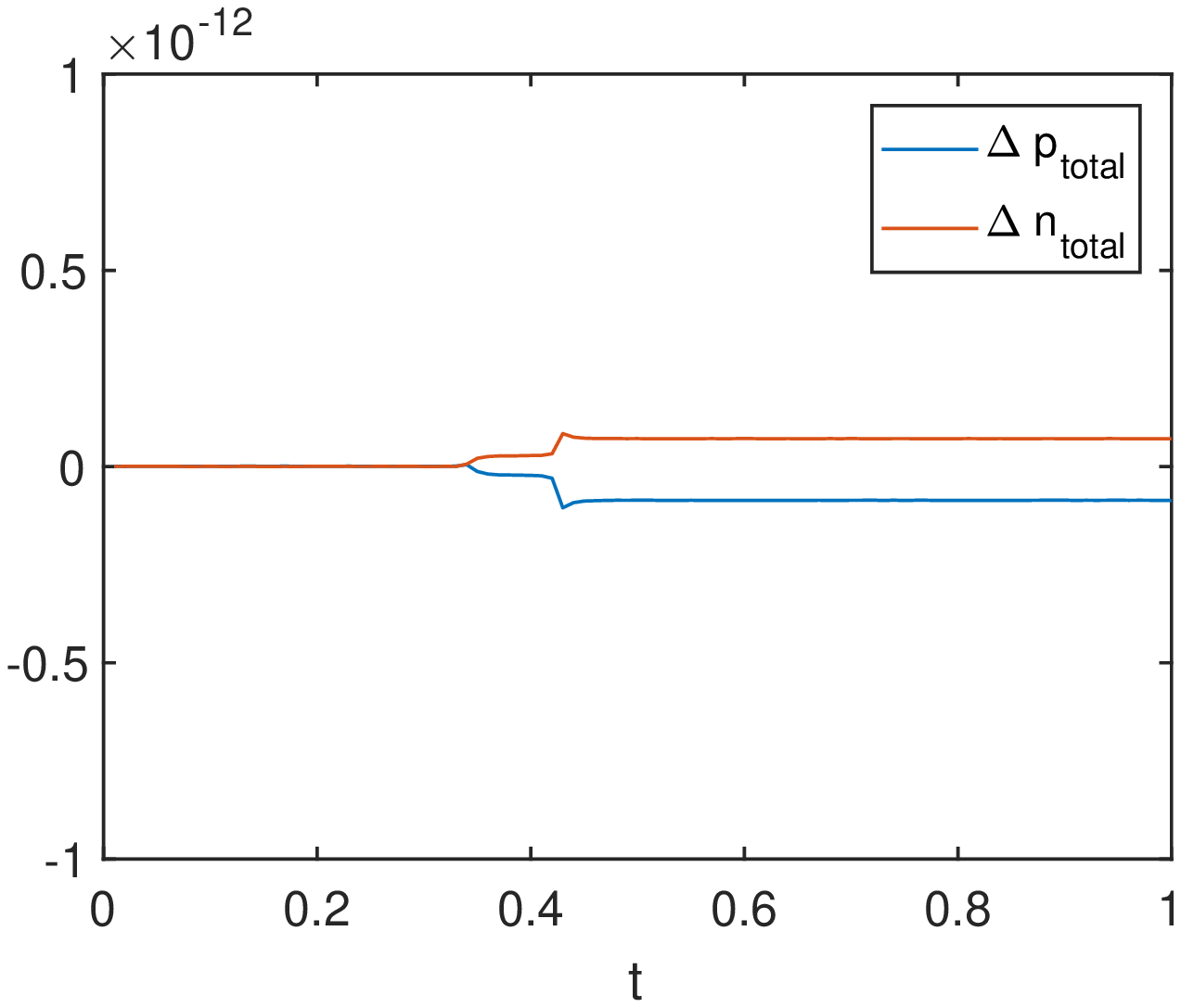}
   \includegraphics[width=.35\textwidth,keepaspectratio]{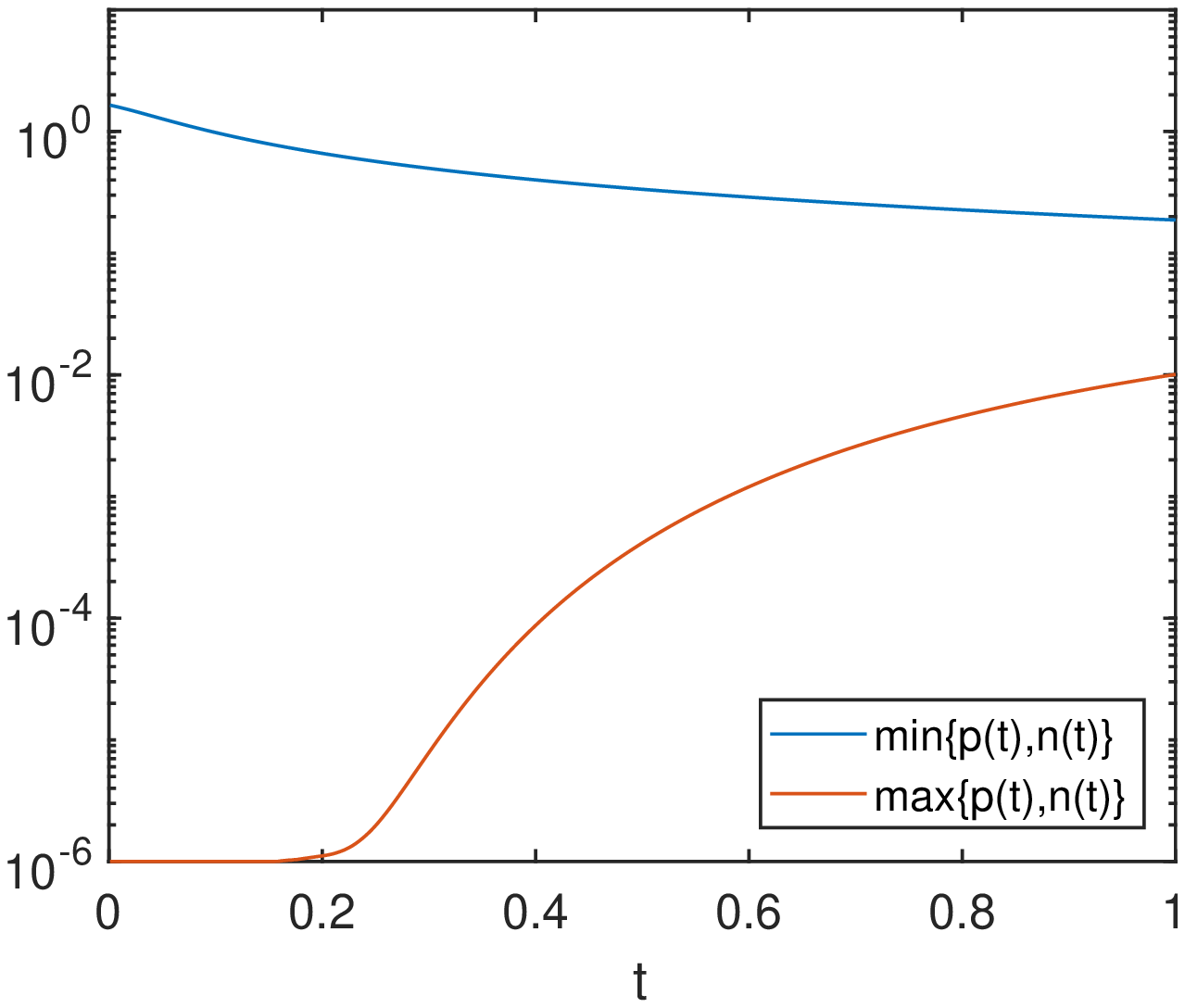}$$
  \caption{(Example 2) Left: total energy density and electrostatic energy density. Middle: deviation of the average concentration to the initial. Right: Lower and upper bound.}\label{eng}
\end{figure}

We use the second-order scheme with the time step $\delta t=10^{-3}$. 
To show the profiles of $p$ and $n$, we plot them on the line $x=y$ at $t=0.1,0.2,0.4,1$ in Fig.~\ref{profile}. 
We also examine the energy dissipation of the total energy and the electrostatic energy in Fig.~\ref{eng} (left), and find they indeed decrease as $t$ grows. 
The change of average concentration is given in Fig.~\ref{eng} (middle), where we find that the error is neglible. 
We also plot the lower and upper bounds of $p$ and $n$ about $t$ in the right of Fig.~\ref{eng}, where we observe that the numerical results keep the maximum principle. 

We also experiment with a larger time step $\delta t=10^{-2}$, where the maximum principle and energy dissipation are still observed. 

\textbf{Efficiency of the scheme. } 
Let us use the Example 2 to examine the efficiency. 
We plot the number of Newton iterations, and the maximum number of the GMRES iteration in each Newton step, for $\delta t=10^{-3}$ and $10^{-2}$. 
The number of the Newton iterations is slightly larger in the first few time steps, and for most time steps we only need 2--4 Newton iterations. 
For the larger time step, one intuitively expects that more Newton iterations are needed, but it turns out that we only need 1--2 more in this example. 
\begin{figure}
  \centering
  \includegraphics[width=.5\textwidth,keepaspectratio]{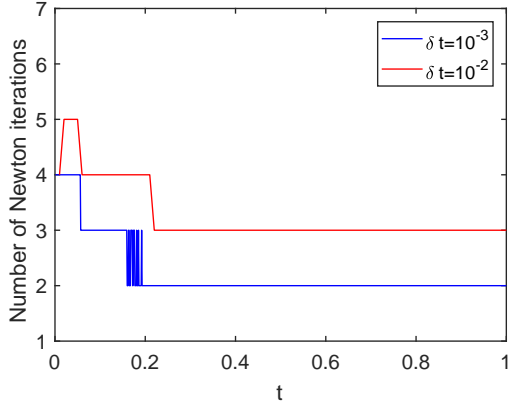}
  \caption{(Example 2) Number of Newton iterations in each time step, for two $\delta t$. }\label{itr}
\end{figure}

\textbf{Effect of boundary values. }
In the following two examples, we solve the PNP equations on $[0,1]^2$. 
The Neumann boundary conditions are imposed on $\mu_i$, while on $\phi$ the Dirichlet boundary conditions are imposed for the four solid line segments, $1/4\le x\le 3/4, y=0,1$ and $1/4\le y\le 3/4, x=0,1$, shown in Fig.~\ref{bnd}. 
For the rest boundary the Neumann boundary conditions are imposed. 
The external potential $\phi_e$ is obtained by solving $-\nabla\cdot(\epsilon\nabla\phi)=0$ with the same types of boundary conditions in Fig.~\ref{bnd}, but nonhomogeneous on the four solid line segments, specified by $\phi_B^L(y),\phi_B^R(y),\phi_B^D(x),\phi_B^U(x)$. 
Recall that for $\phi$ we always assume homogeneous boundary conditions. 
So, it is equivalent to require that the total electric potential $\phi_{total}=\phi+\phi_e$ satisfies 
\begin{align*}
  &-\nabla\cdot(\epsilon\nabla\phi_{total})=\rho_0+\sum_{i=1}^Nz_ic_i,\\
  &\phi_{total}(0,y)=\phi_B^L(y),\ \phi_{total}(1,y)=\phi_B^R(y),\ \frac{1}{4}\le y\le \frac{3}{4}\\
  &\phi_{total}(x,0)=\phi_B^D(x),\ \phi_{total}(x,1)=\phi_B^U(x),\ \frac{1}{4}\le x\le \frac{3}{4}\\
  &\frac{\partial \phi_{total}}{\partial \bm{n}}=0,\quad\text{elsewhere}. 
\end{align*}
\begin{figure}
  \centering
  \includegraphics[width=.48\textwidth,keepaspectratio]{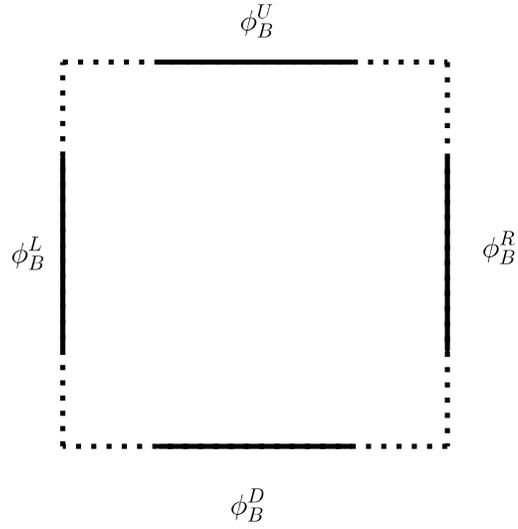}
  \caption{Example 3 \& 4: illustration of boundary conditions on $\phi$. On solid lines Dirichlet boundary conditions are imposed, while on dotted lines Neumann boundary conditions are imposed. }\label{bnd}
\end{figure}

\begin{figure}
  \centering
  \includegraphics[width=.48\textwidth,keepaspectratio]{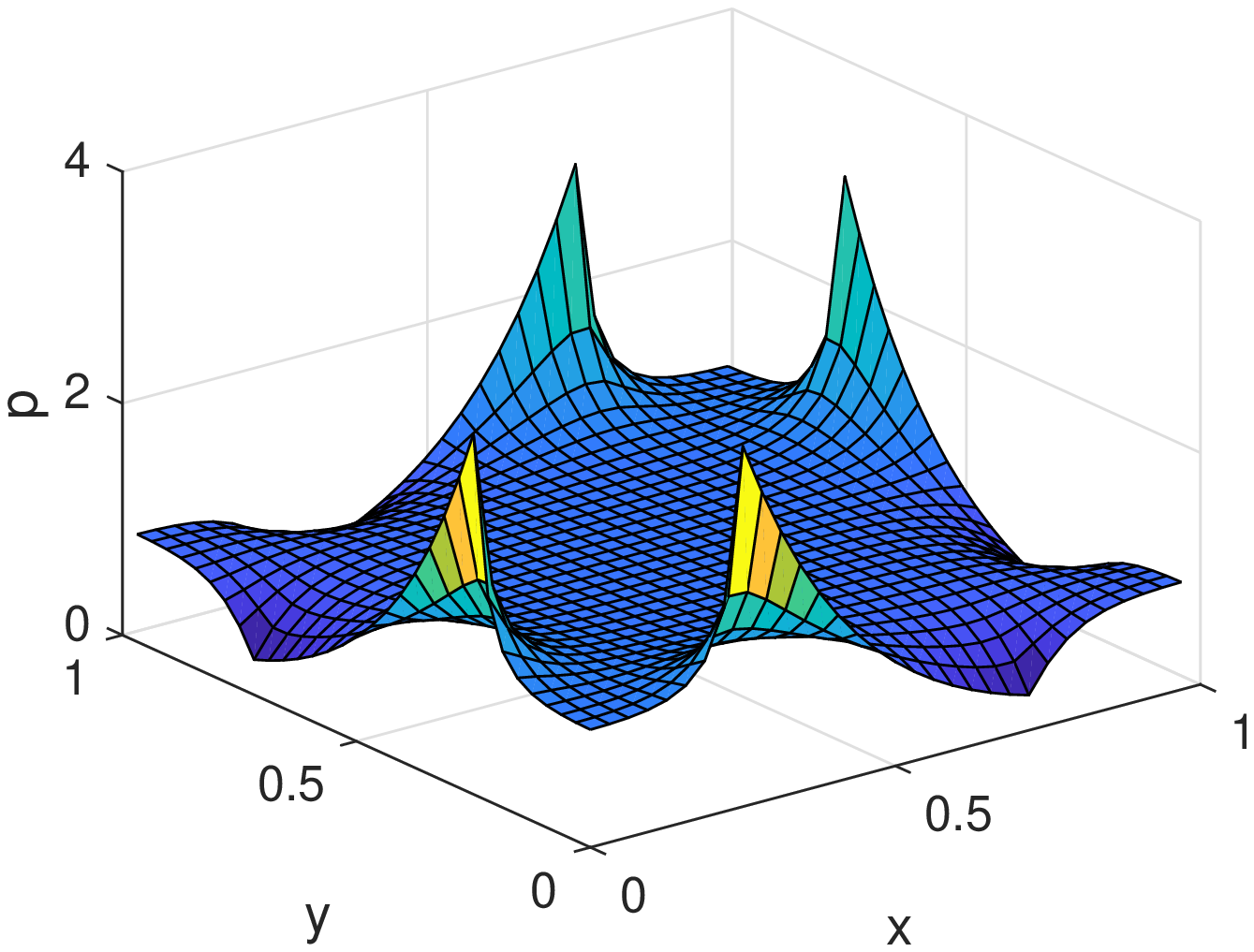}
  \includegraphics[width=.48\textwidth,keepaspectratio]{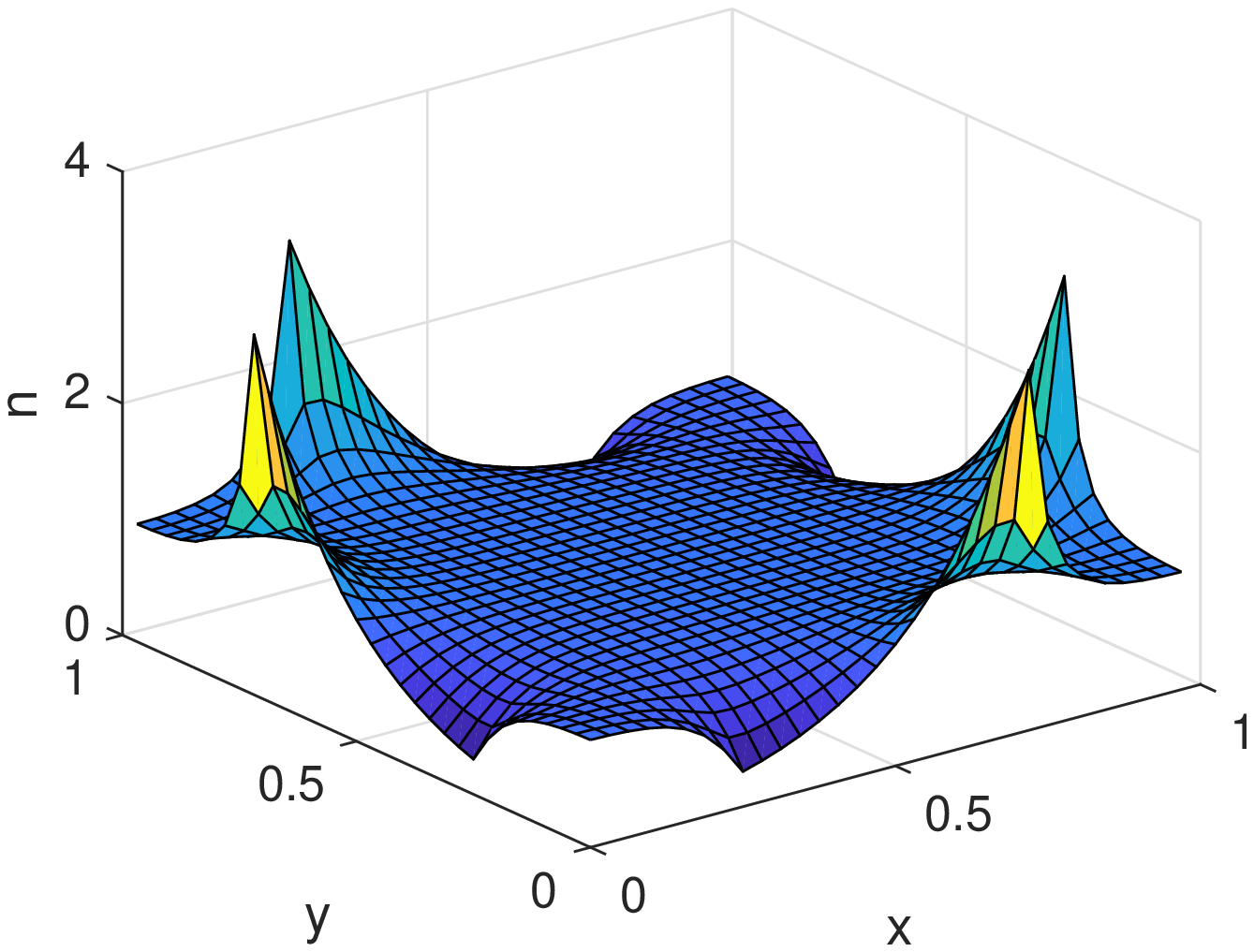}\\
  \includegraphics[width=.48\textwidth,keepaspectratio]{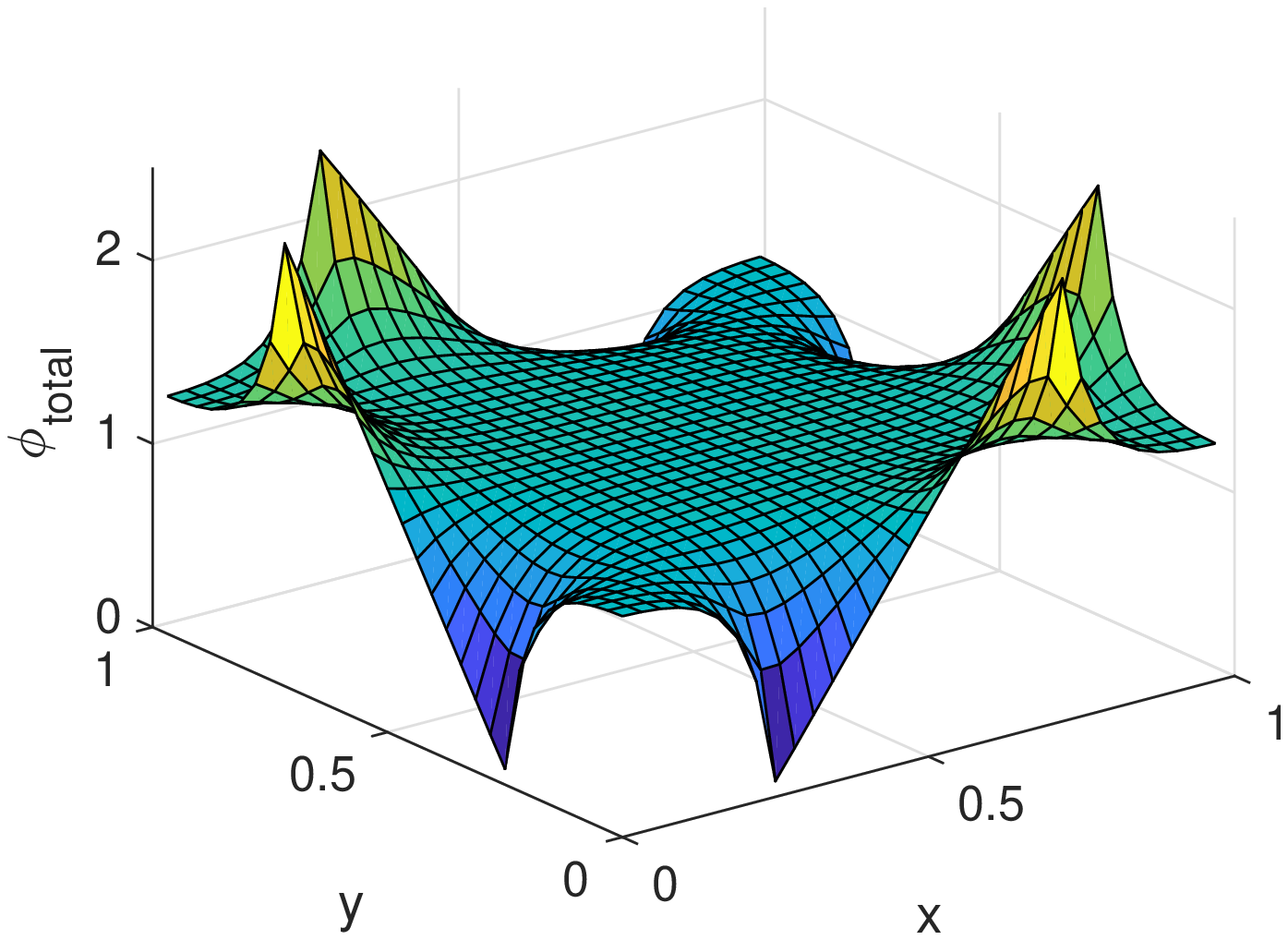}
  \includegraphics[width=.48\textwidth,keepaspectratio]{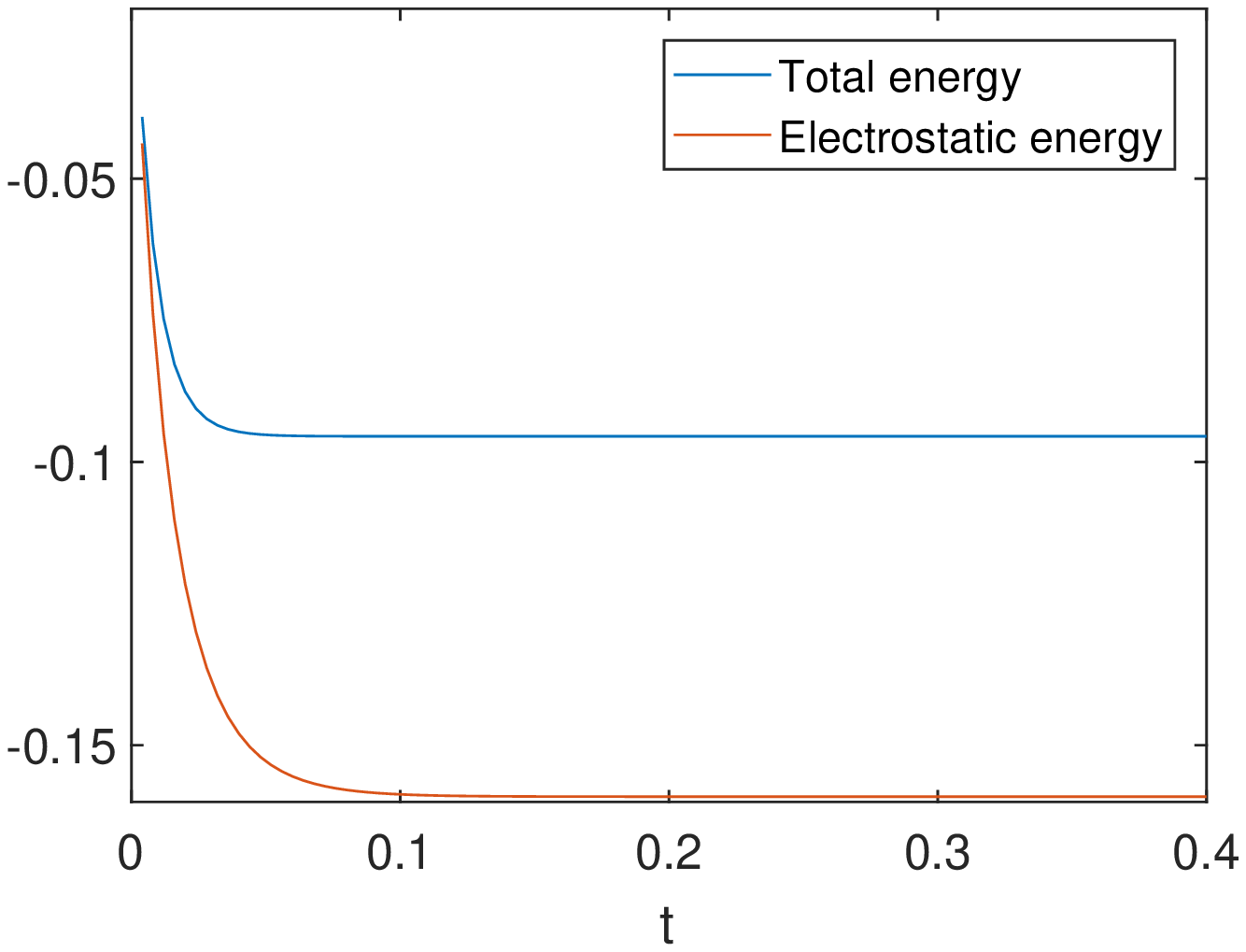}
  \caption{(Example 3) Concentration, eletric potential, and energy for $a=2.5$. }\label{profile_two}
\end{figure}
\textbf{Example 3} (Two-component system with boundary potential). 
We let $z_1=1$, $z_2=-1$, $p=c_1$, $n=c_2$, $D_1=D_2=1$, and $\epsilon=0.01$, $\rho_0=0$. 
The initial value is chosen as $p(x,y,0)=n(x,y,0)=1$. 
The boundary values are specified as follows, 
\begin{align*}
  \phi_B^L(y)=a(y-\frac{1}{4}), \phi_B^R(y)=a(\frac{3}{4}-y),
  \phi_B^D(x)=a(x-\frac{1}{4}), \phi_B^U(y)=a(\frac{3}{4}-x), 
\end{align*}
where $a$ is a parameter to be varied. 
We discretize the space using finite difference method with $32\times 32$ points, and solve the first-order scheme with the time step $\delta t=4\times 10^{-3}$. 
The system reaches steady state after running $100$ steps to $t=0.4$. 

For $a=2.5$, we plot $p$, $n$, $\phi_{total}$ in Fig.~\ref{profile_two}. 
They are mostly flat except near the boundary, with $p$ peaking where $\phi_{total}$ reaches minimum on the boundary, $n$ peaking where $\phi_{total}$ reaches maximum on the boundary. 
Actually, the profile of $n$ is identical to the profile of $p$ rotated by 90 degrees due to the symmetry of the boundary values on $\phi_{total}$. 
The total energy and electrostatic energy are also plotted in Fig.~\ref{profile_two}, where the electrostatic energy here is defined by 
$$
\int \phi_{total}(p-n)\md x\md y. 
$$
Both of them show dissipation, although for the latter it is not proved. 
We also examine how the maximum and minimum concentration evolve with different value of $a$. 
Because of the symmetry, we only plot $p$ in Fig.~\ref{maxmin_boundary}. 
\begin{figure}
  \centering
  \includegraphics[width=.48\textwidth,keepaspectratio]{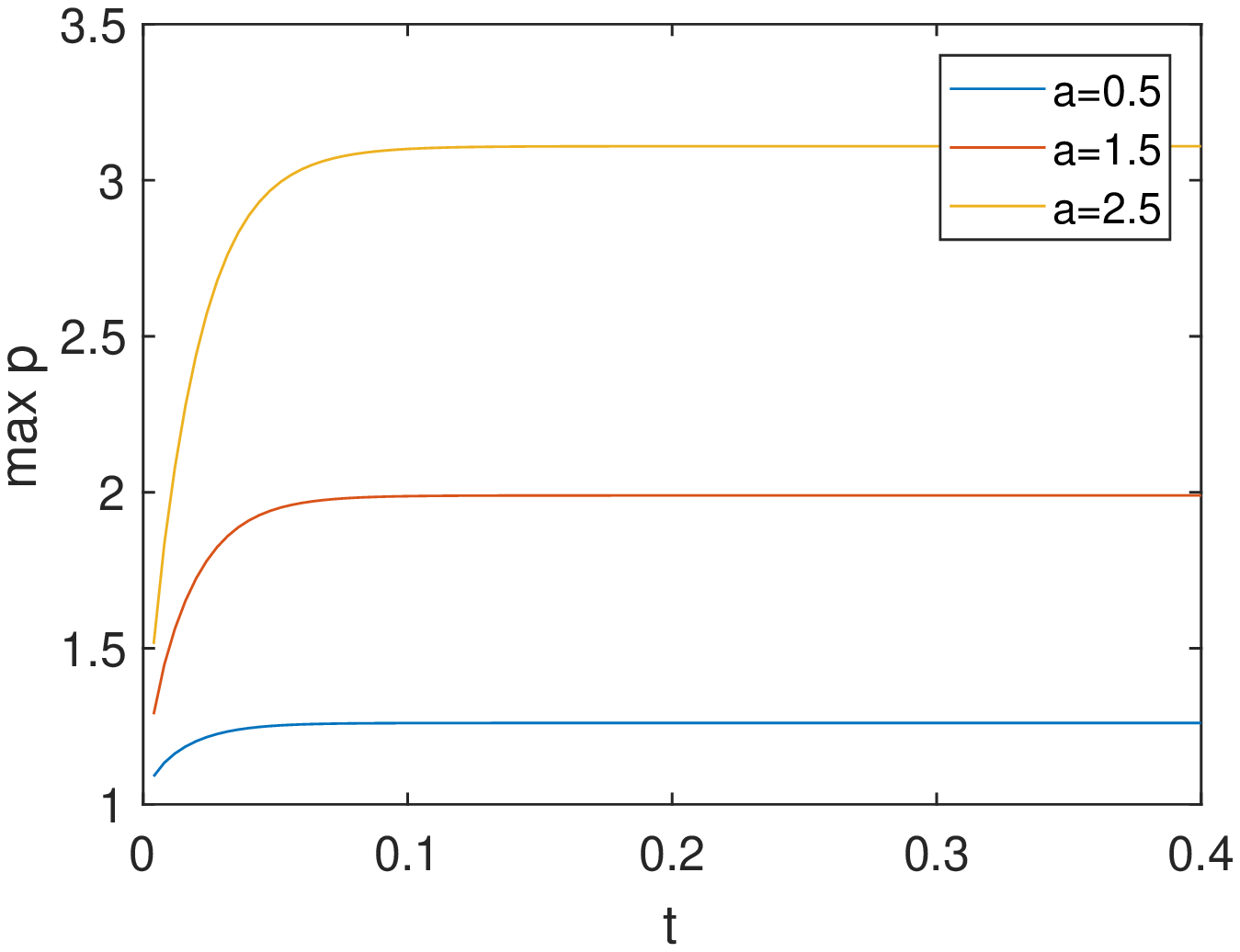}
  \includegraphics[width=.48\textwidth,keepaspectratio]{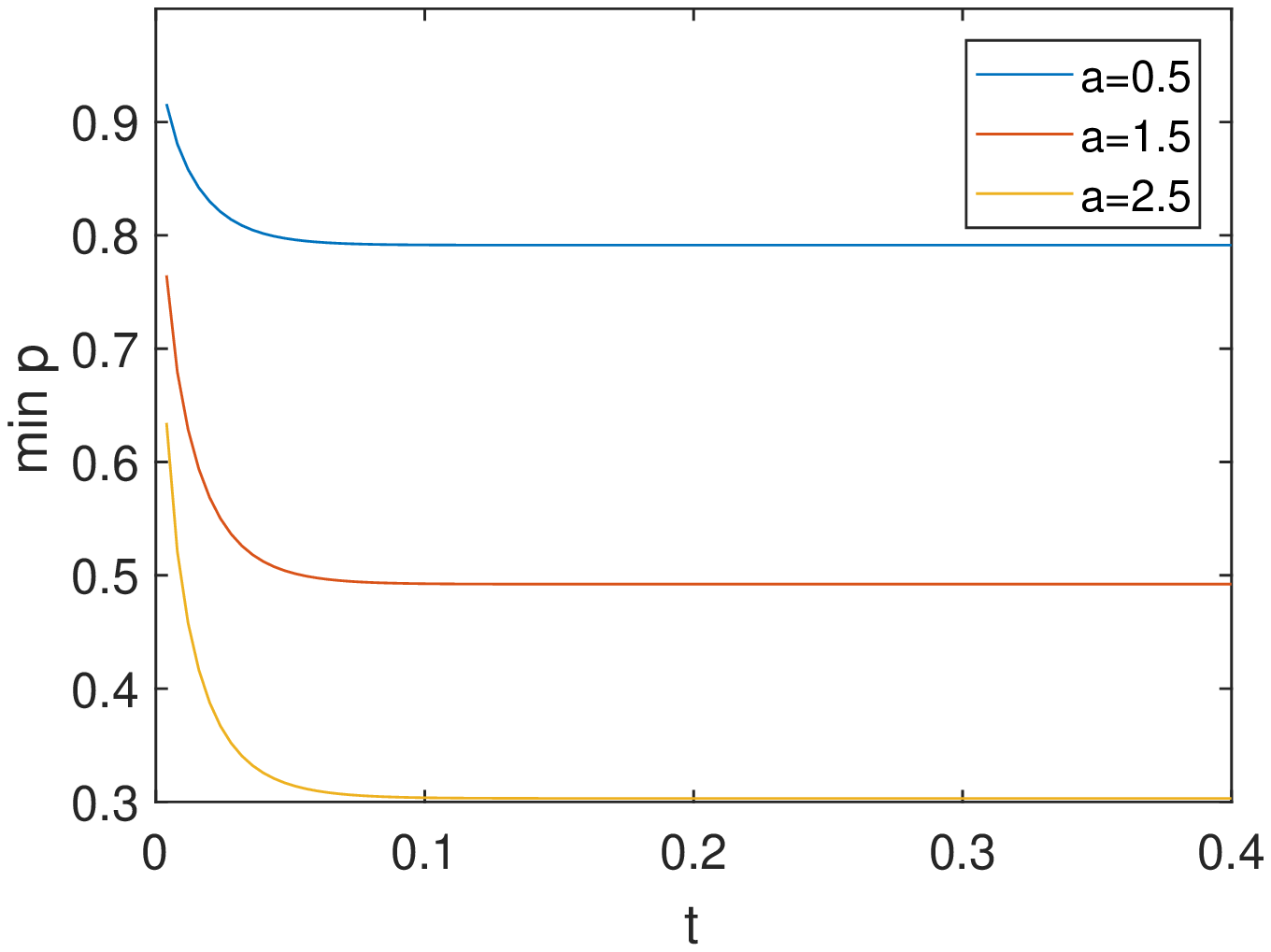}
  \caption{(Example 3) Maximum and minimum concentration for different $a$. }\label{maxmin_boundary}
\end{figure}

\textbf{Example 4.} 
As the last example, we  consider a three-component system.
Choose $z_1=1$, $z_2=-1$, $z_3=2$, and $D_1=D_2=D_3=1$. 
The other settings are identical to Example 3. 
The initial value is chosen as $c_1(x)=c_3(x)=1$ and $c_2(x)=3$ so that the system is electrically neutral. 
The boundary values are chosen as constants on each line segments: 
$$
\phi_B^L=\phi_B^R=-A, \phi_B^D=\phi_B^U=A. 
$$
The spatial and time discretization are also identical to Example 3. 

\begin{figure}
  \centering
  \includegraphics[width=.48\textwidth,keepaspectratio]{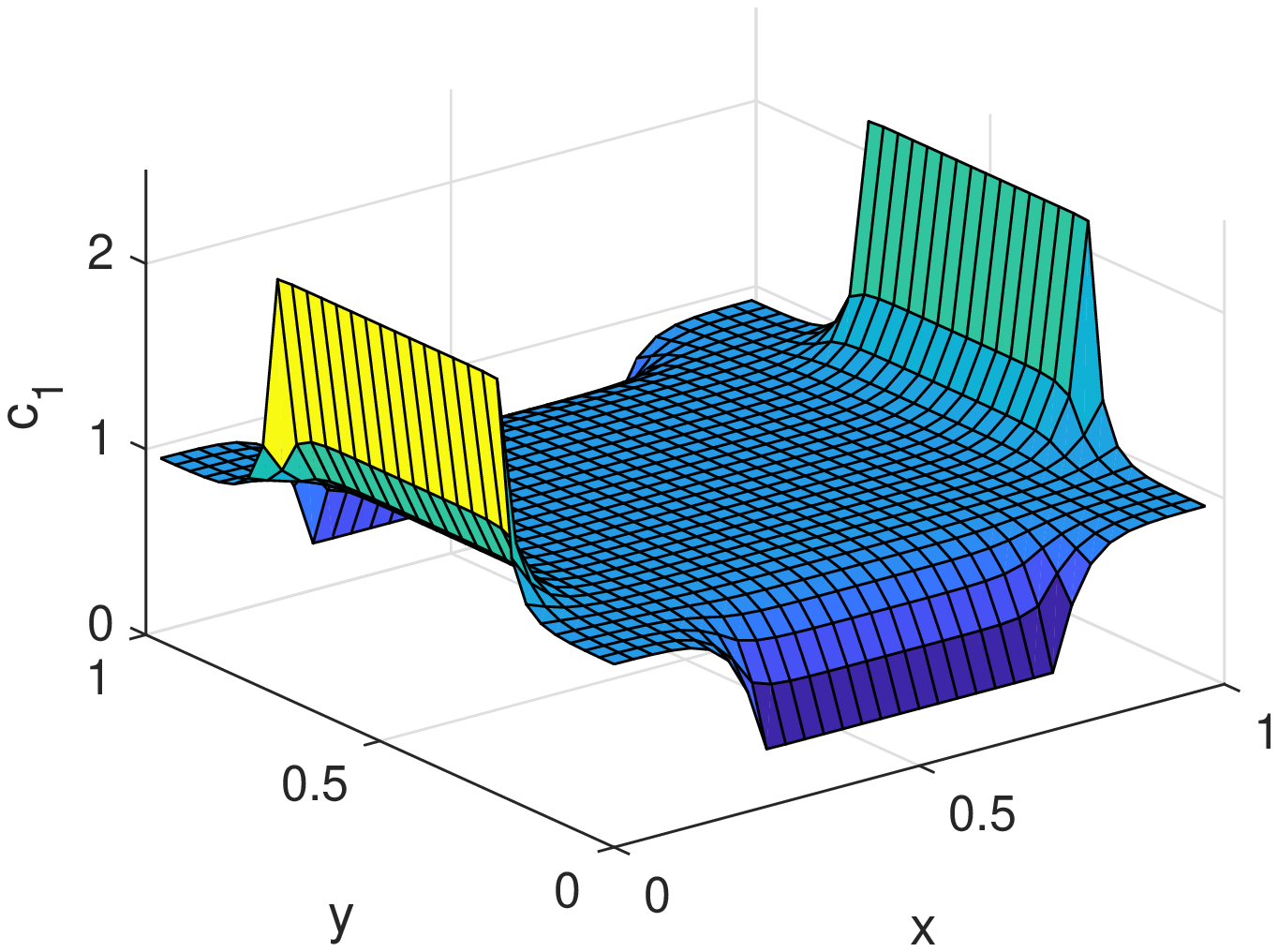}
  \includegraphics[width=.48\textwidth,keepaspectratio]{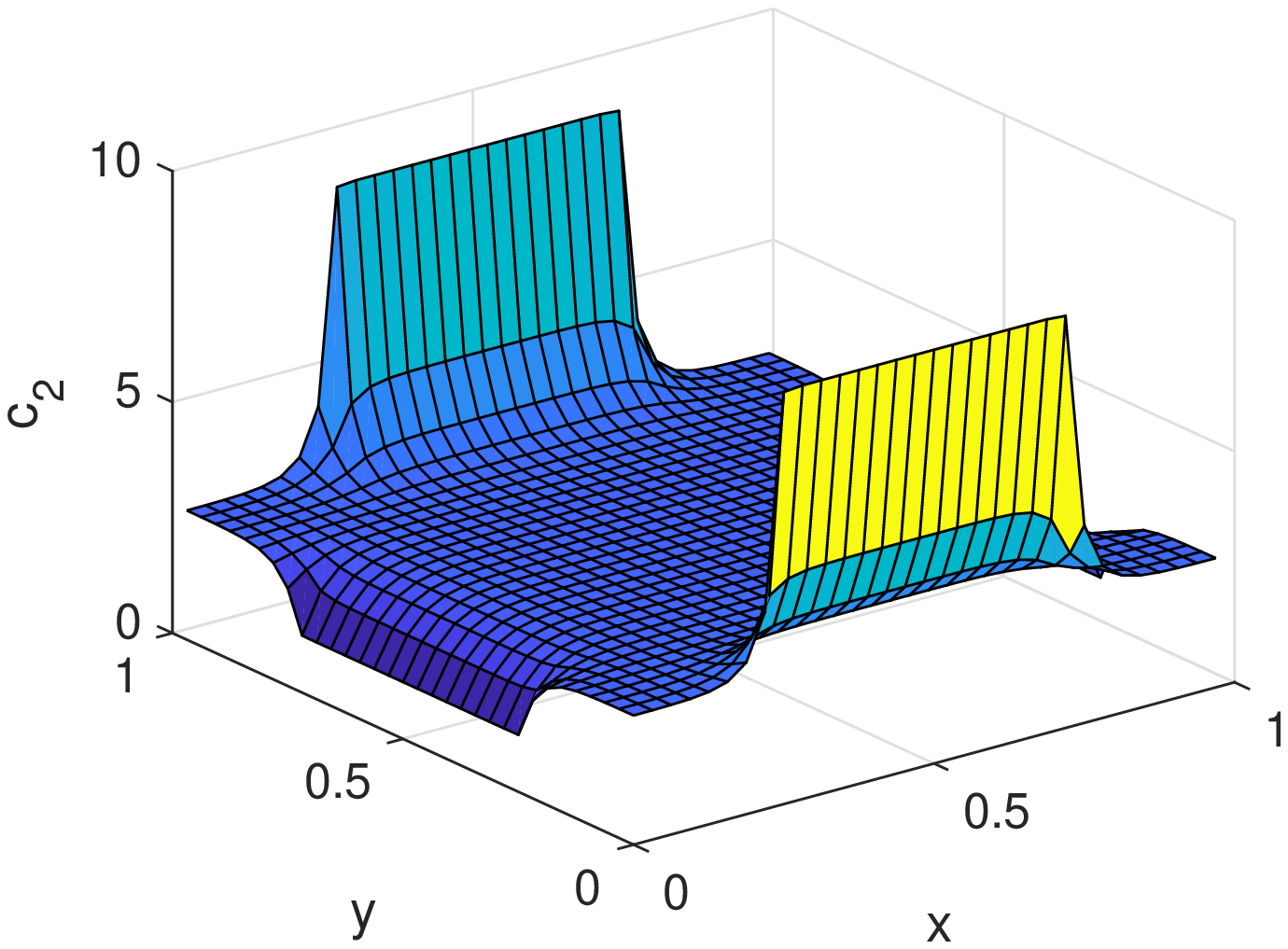}\\
  \includegraphics[width=.48\textwidth,keepaspectratio]{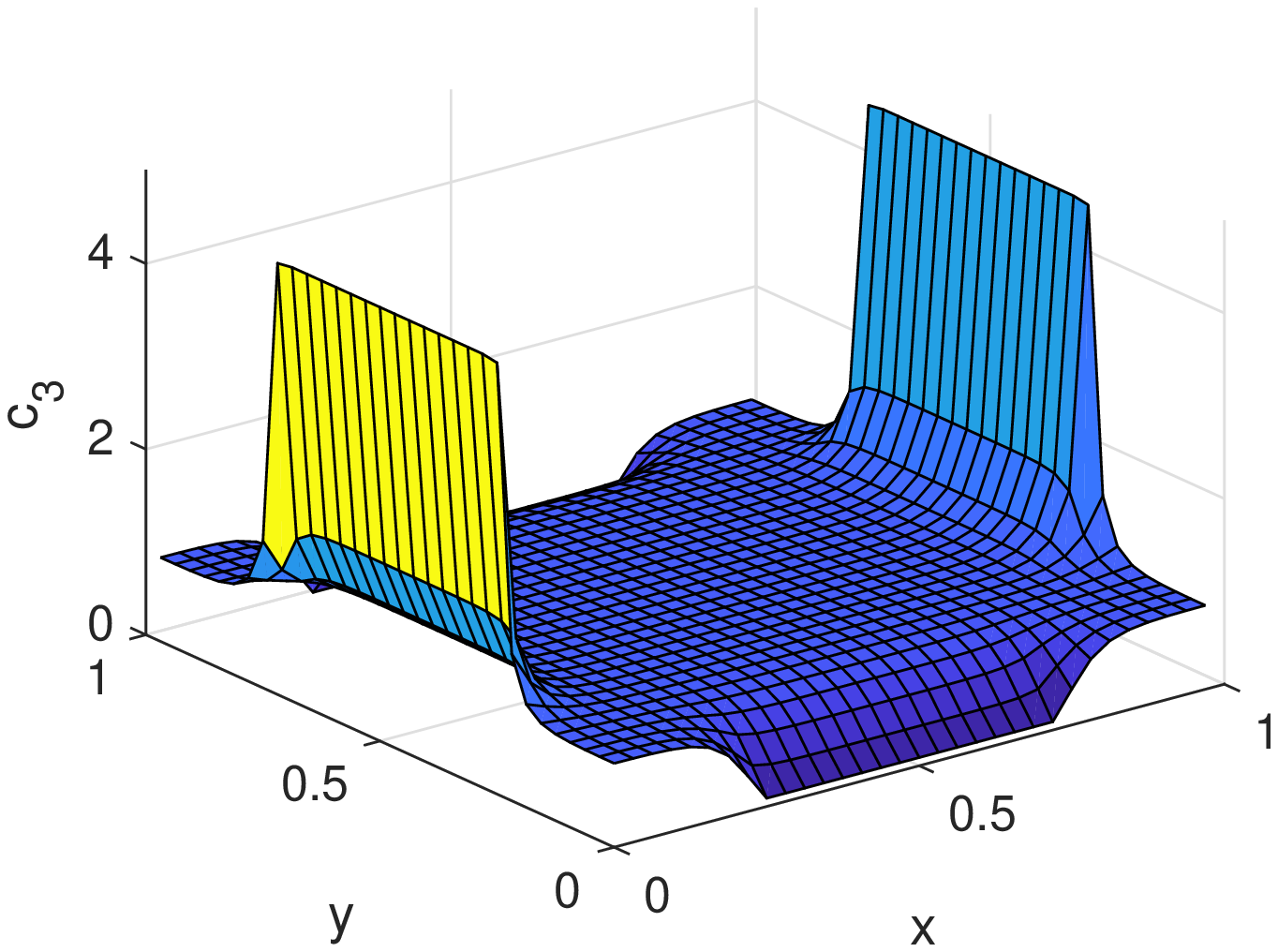}
  \includegraphics[width=.48\textwidth,keepaspectratio]{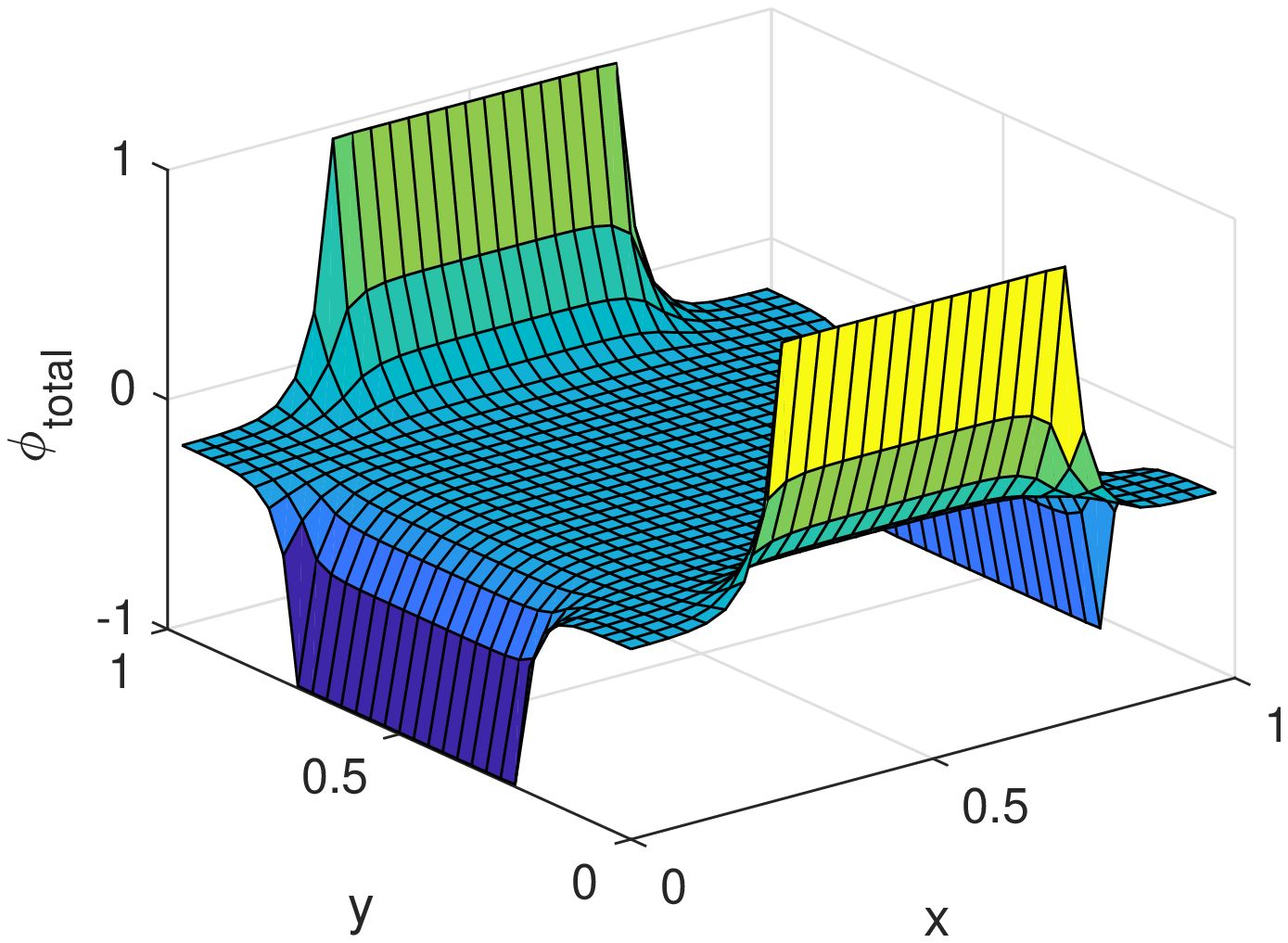}
  \caption{(Example 4) Concentration and eletric potential for $A=1$. }\label{profile_three}
\end{figure}

For $A=1$, the concentration and total electric potential are plotted in Fig.~\ref{profile_three}. 
We also find that they are mostly flat except near the boundary. 
The two types of positive particles accumulate at the left and right boundaries, with $c_3$ larger, while the negative particles accumulate at the other two boundaries. 
We also compare the energy dissipation (Fig.~\ref{energy_three}) and the concentration near the boundaries (Fig.~\ref{boundary_three}). 

\begin{figure}
  \centering
  \includegraphics[width=.48\textwidth,keepaspectratio]{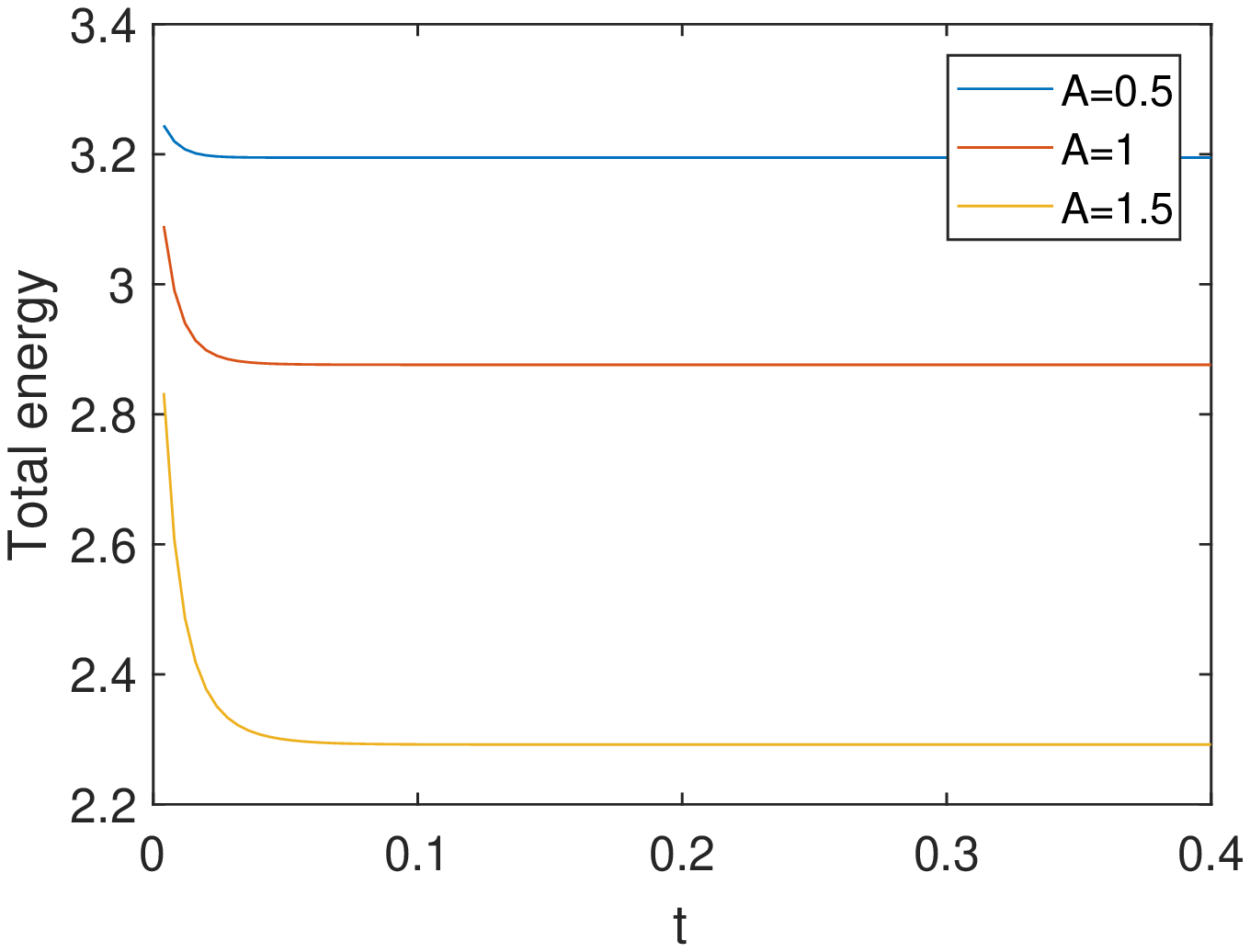}
  \includegraphics[width=.48\textwidth,keepaspectratio]{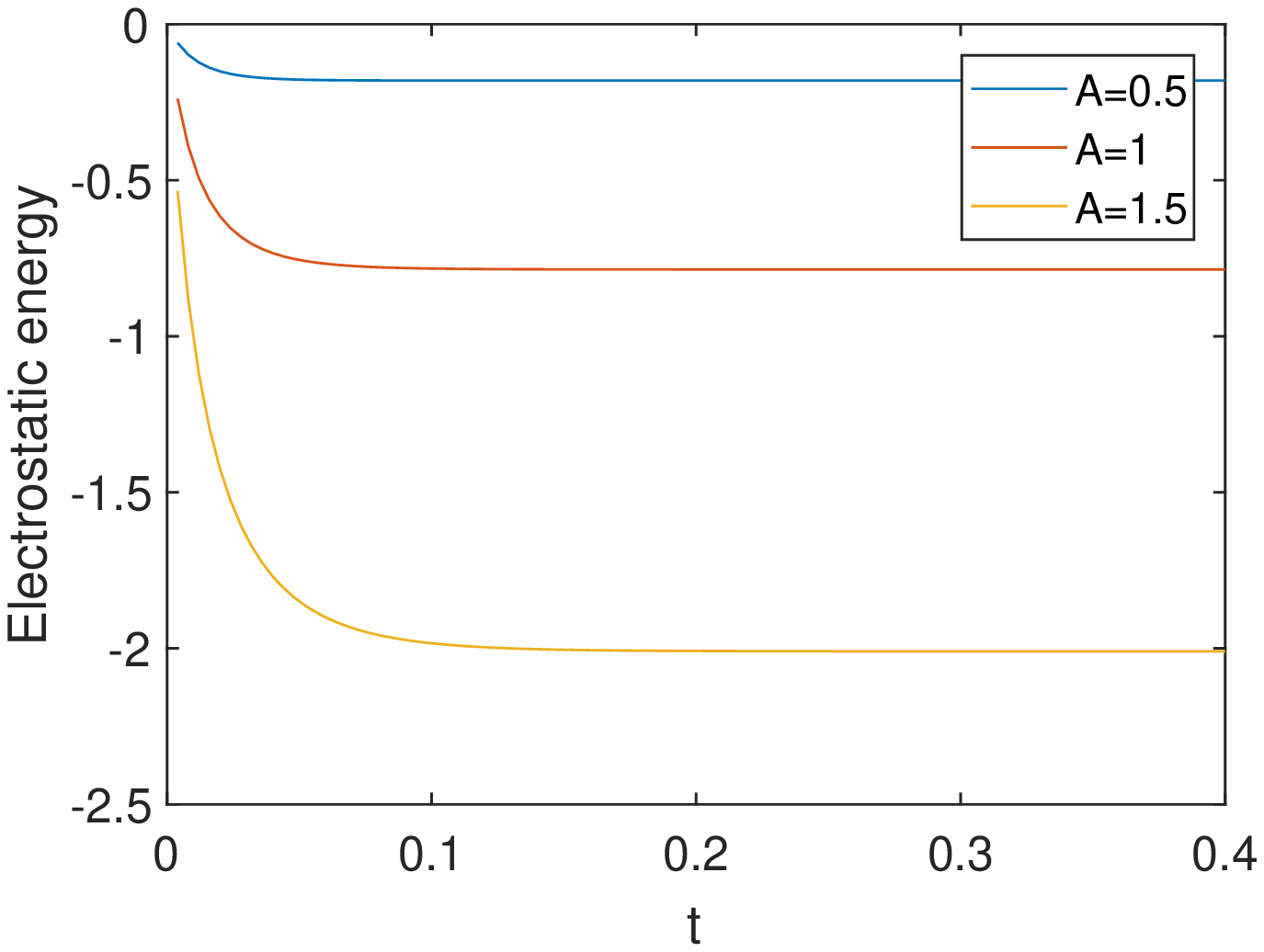}
  \caption{(Example 4) Total energy and electrostatic energy for different boundary values. }\label{energy_three}
\end{figure}

\begin{figure}
  \centering
  \includegraphics[width=.48\textwidth,keepaspectratio]{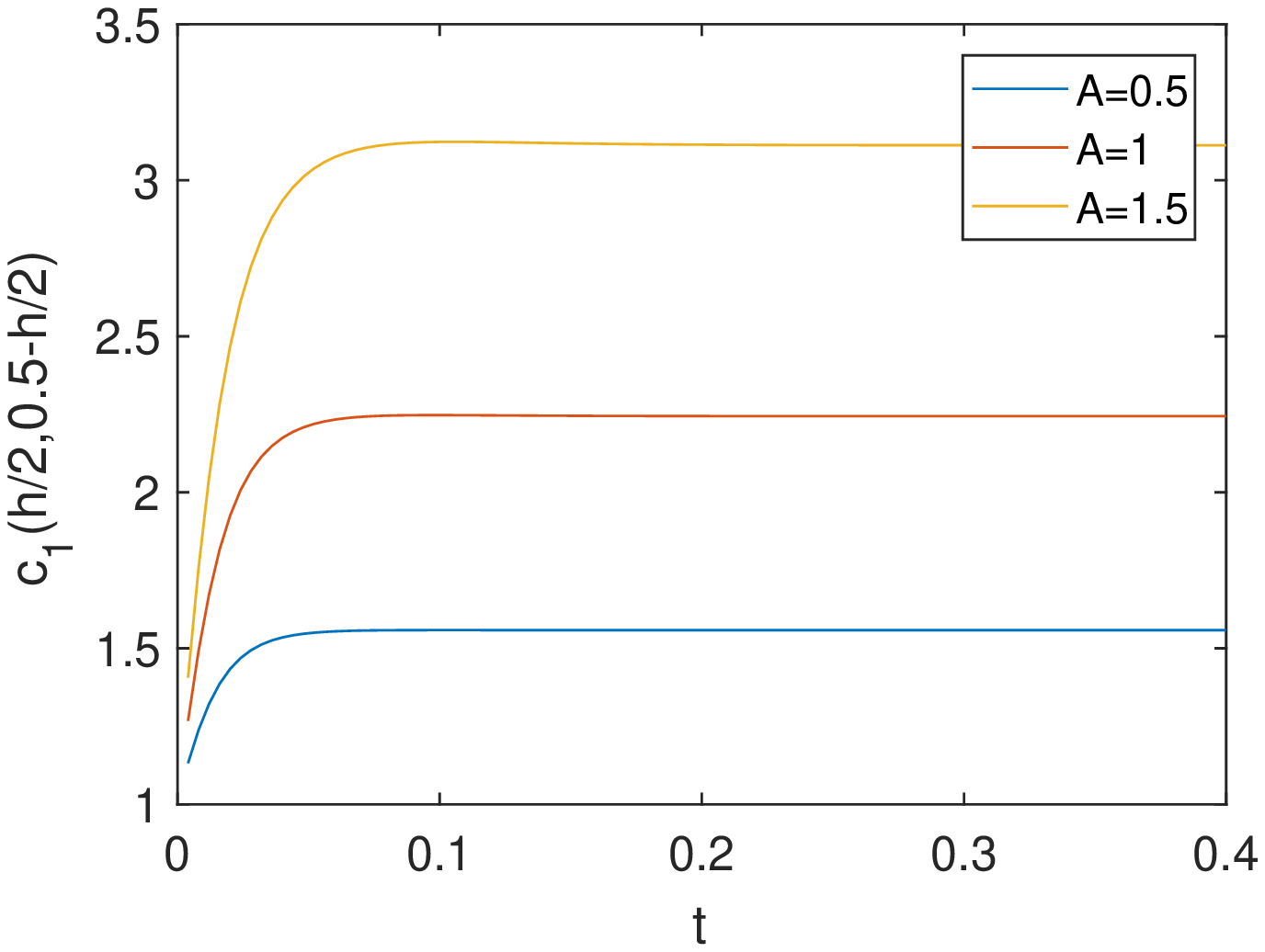}
  \includegraphics[width=.48\textwidth,keepaspectratio]{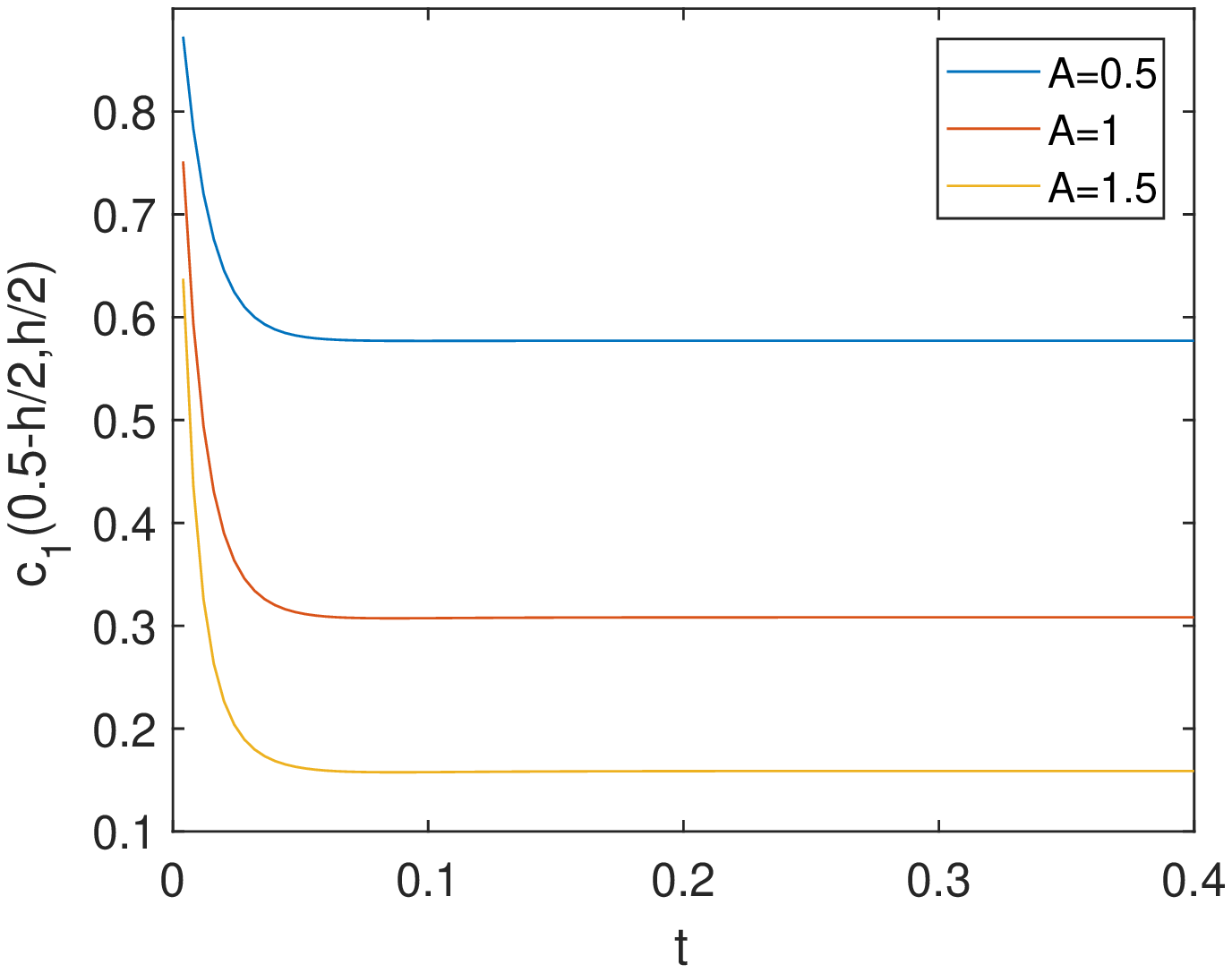}
  \includegraphics[width=.48\textwidth,keepaspectratio]{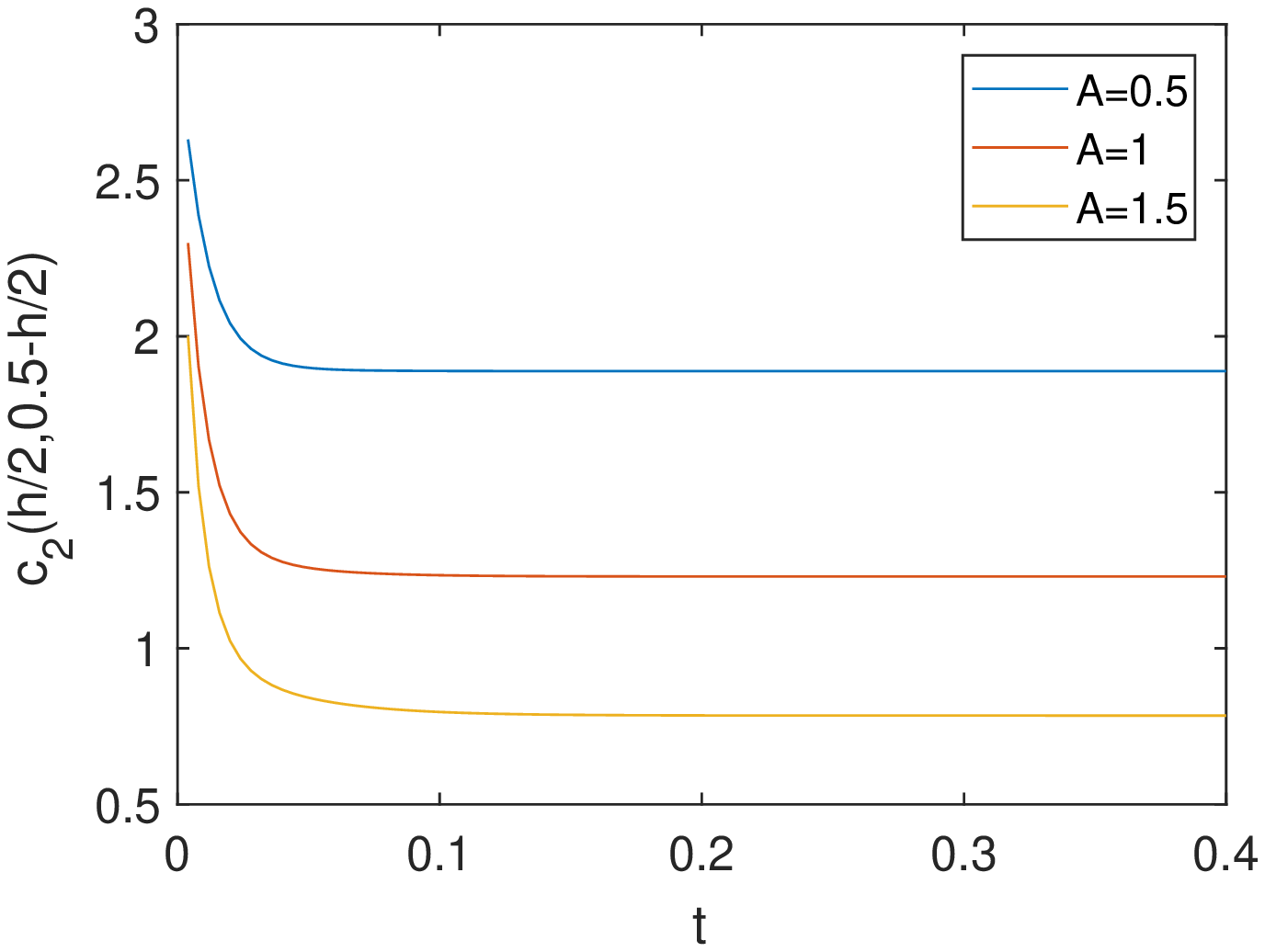}
  \includegraphics[width=.48\textwidth,keepaspectratio]{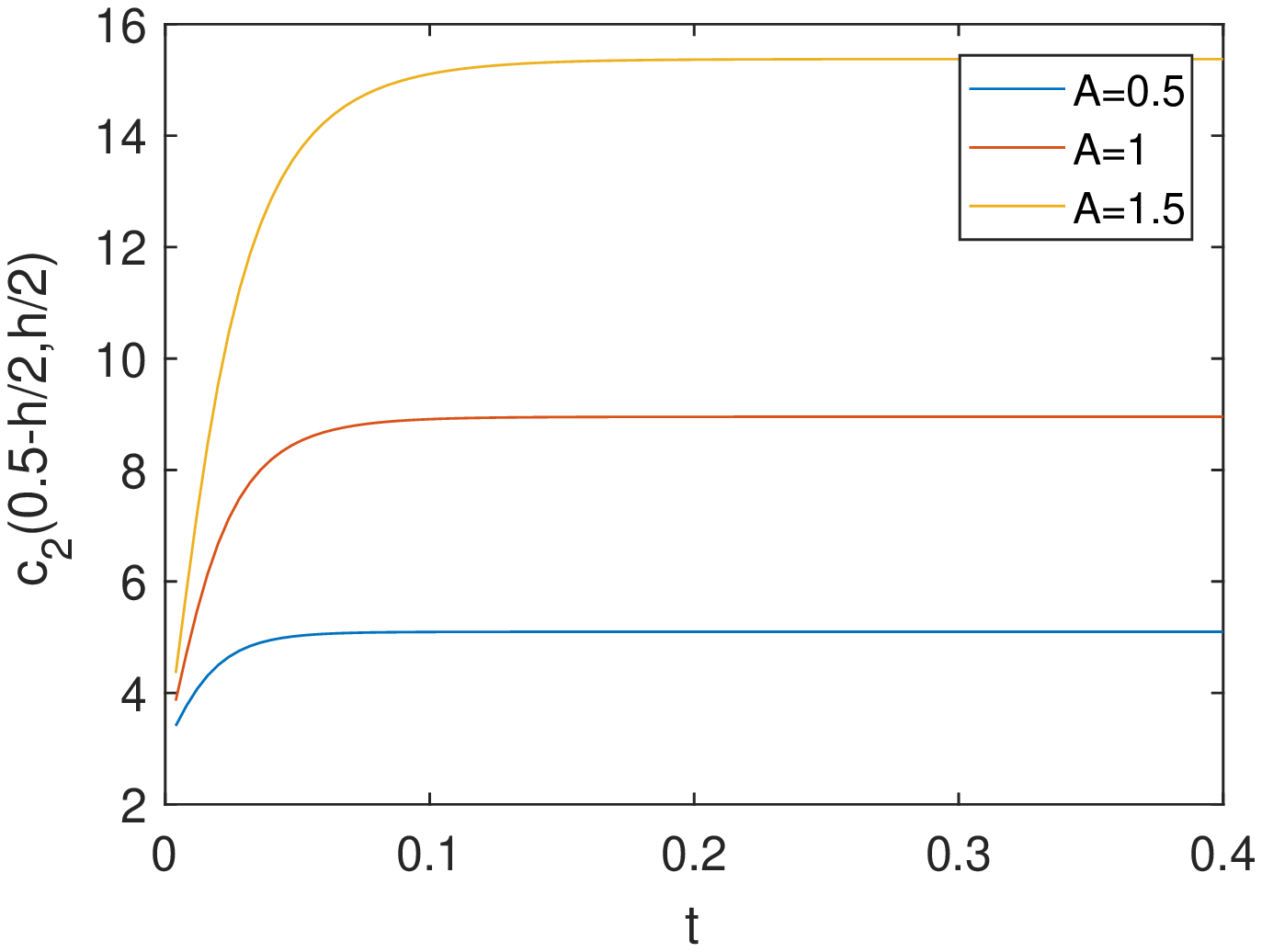}
  \includegraphics[width=.48\textwidth,keepaspectratio]{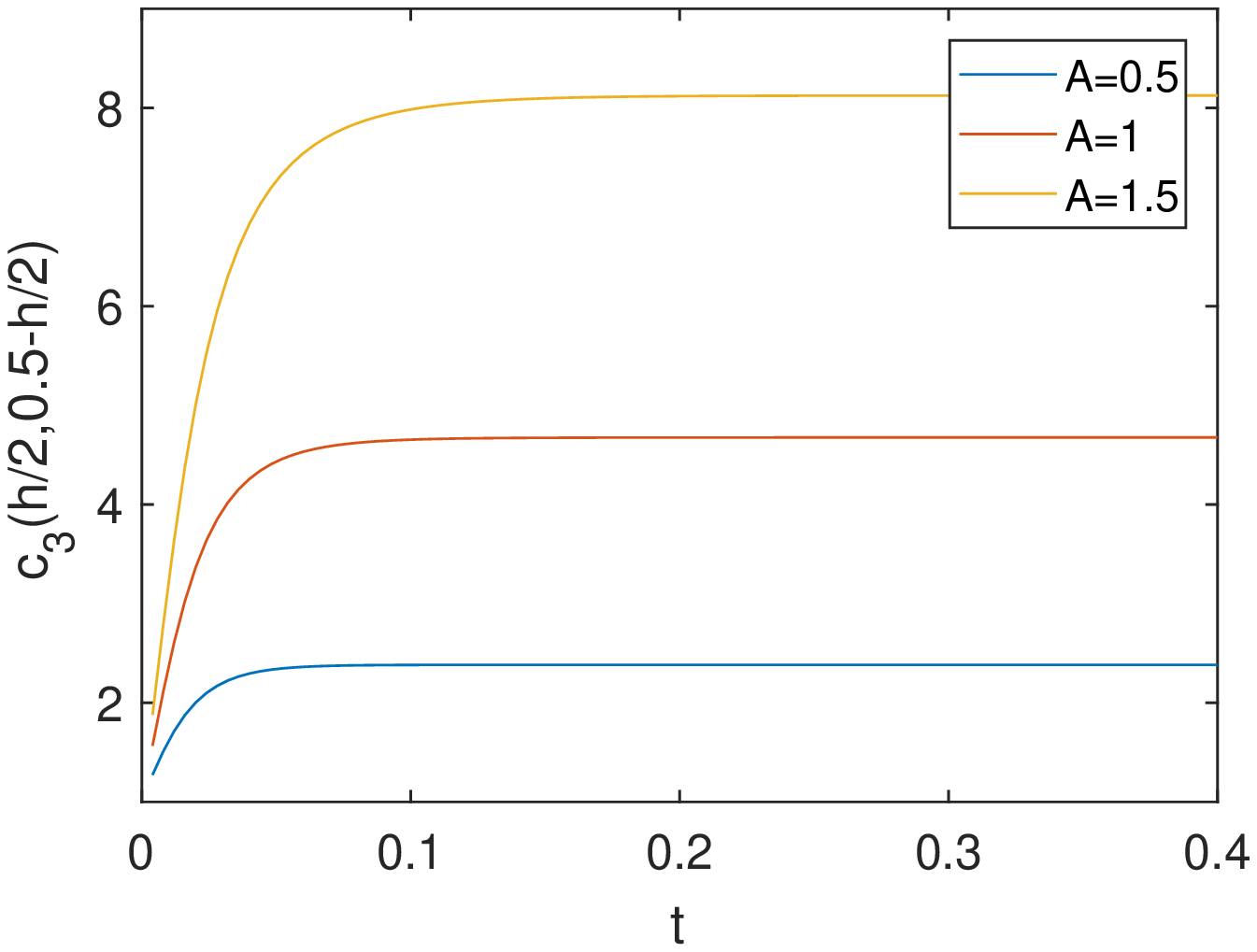}
  \includegraphics[width=.48\textwidth,keepaspectratio]{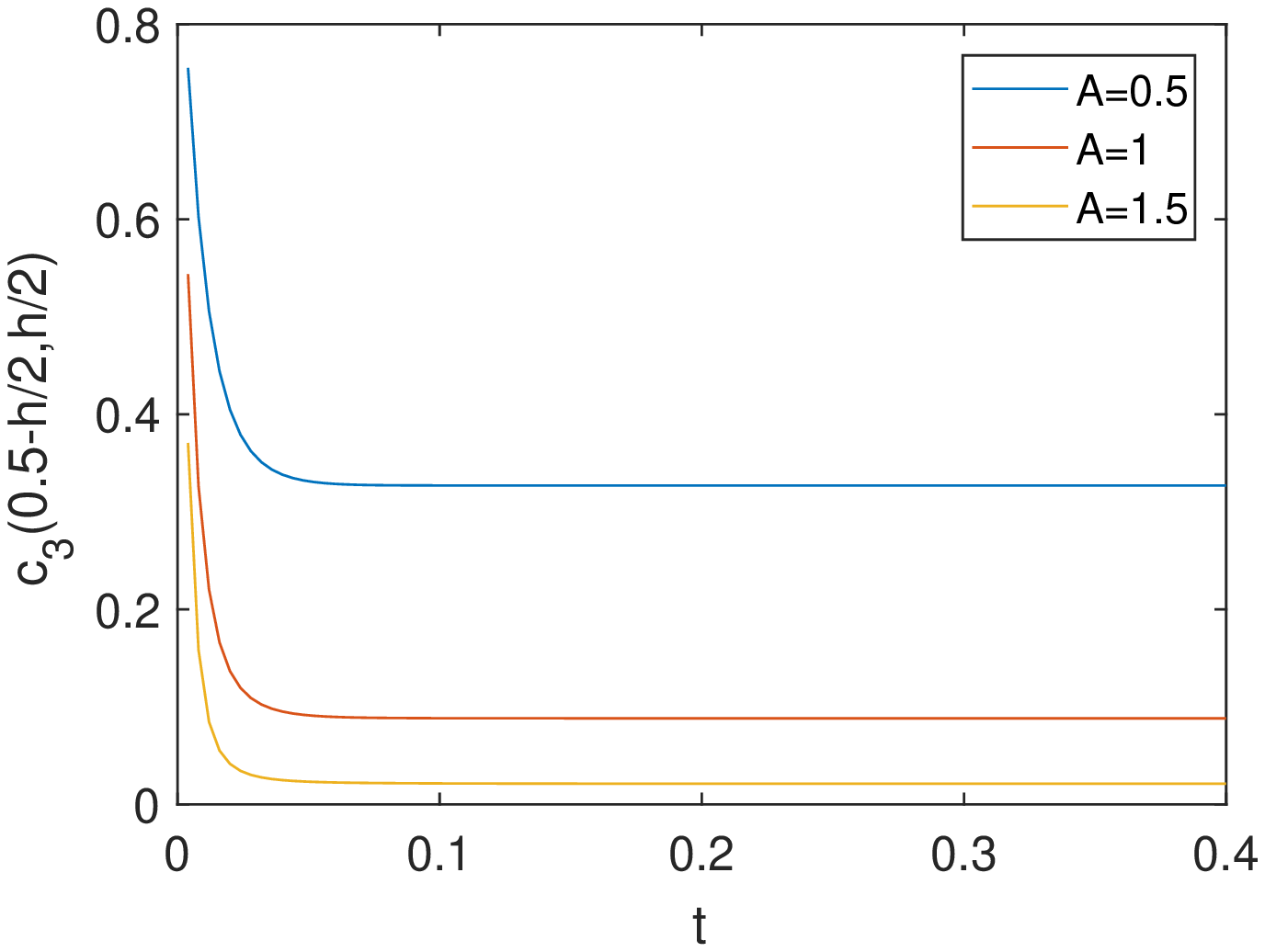}
  \caption{(Example 4) The concentration evolution close to the left (left column) and the upper boundary (right column), for different boundary values. 
    Here we plot the concentration at two grid points $(h/2,0.5-h/2)$ and $(0.5-h/2,h/2)$, near the center of two boundaries, where $h=1/32$. }\label{boundary_three}
\end{figure}

\section{Concluding Remarks}
We proposed in this paper first- and second-order schemes for the PNP equations. 
We proved that both schemes are  unconditionally  mass conservative, uniquely solvable and positivity preserving; and that the first-order scheme is also unconditionally energy dissipative. To the best of our knowledge, our first-order scheme  is  the first such scheme which possesses, unconditionally, all four important properties satisfied by the PNP equations.  While we can not prove the energy dissipation for the second-order scheme, our numerical result indicates that it is energy dissipative as well. 

The schemes lead to nonlinear system at each time step but it possesses a unique solution which is the minimizer of a strictly convex functional. Hence, its solution can be efficiently obtained by using a Newton's iteration method.
We presented ample numerical tests to verify the claimed  properties for both first- and second-order schemes. 
In addition, in special cases where the PNP equation possesses maximum principle and electrostatic energy dissipation, our numerical results show that the schemes also satisfies them.

\bibliographystyle{plain}
\bibliography{bib_gflow}

\end{document}